\documentclass[reqno,a4paper, 11pt]{amsart}

\usepackage[a4paper=true,pdfpagelabels]{hyperref}
\usepackage{graphicx}
\usepackage{mathabx}
\usepackage[ansinew]{inputenc}
\usepackage{amsfonts,epsfig}
\usepackage{latexsym}
\usepackage{amsmath}
\usepackage{amssymb}
\usepackage{color}

\newtheorem{theorem}{Theorem}

\newtheorem{corollary}[theorem]{Corollary}

\theoremstyle{definition}

\theoremstyle{remark}

\numberwithin{equation}{section}

\newcommand{\B}{\mathcal{B}}

\newcommand{\D}{\mathbb{D}}
\newcommand{\DD}{\widehat{\mathcal{D}}}

\newcommand{\DDD}{\mathcal{D}}
\newcommand{\N}{\mathbb{N}}
\newcommand{\RR}{\mathbb{R}}

\newcommand{\C}{\mathbb{C}}

\newcommand{\e}{\varepsilon}

\newcommand{\conz}{\overline{z}}
\newcommand{\cona}{\overline{a}}

\renewcommand{\phi}{\varphi}

\newcommand{\T}{\mathbb{T}}

\newcommand{\M}{\mathcal{M}}

\def\BMO{\mathord{\rm BMO}}

           \def\e{\varepsilon}
     \def\om{\omega}      
              
                  \def\z{\zeta}
                  
\def\G{\Gamma}

\renewcommand{\H}{\mathcal{H}}

\begin{document}

\title[Optimal off-diagonal estimates for Bergman kernels]{Optimal off-diagonal upper estimates for Bergman reproducing kernels}

\author[Atte Pennanen]{Atte Pennanen}
\address{University of Eastern Finland\\
Department of Physics and Mathematics\\
P.O.Box 111\\FI-80101 Joensuu\\
Finland}
\email{atte.pennanen@uef.fi}

\author[Jouni R\"atty\"a]{Jouni R\"atty\"a}
\address{University of Eastern Finland\\
Department of Physics and Mathematics\\
P.O.Box 111\\FI-80101 Joensuu\\
Finland}
\email{jouni.rattya@uef.fi}

\author[Siyu Wang]{Siyu Wang}
\address{Fudan University\\ School of Mathematical Sciences\\
Shanghai 200433\\ P.R.China}

\address{University of Eastern Finland\\
Department of Physics and Mathematics\\
P.O.Box 111\\FI-80101 Joensuu\\
Finland}
\email{siyuwang@fudan.edu.cn, siyuwang@uef.fi}

\author[Fanglei Wu]{Fanglei Wu}
\address{University of Eastern Finland\\
Department of Physics and Mathematics\\
P.O.Box 111\\FI-80101 Joensuu\\
Finland}
\email{fanglei.wu@uef.fi}

\subjclass[2020]{Primary 30H20, 32A25, 46E22}

\thanks{All the authors are supported in part by Academy of Finland 356029. The first author is supported by Finnish Cultural Foundation, North Karelia Regional fund. The third author is supported by China Postdoctoral Science Foundation (No. 2023TQ0065, No. 2023M740716, No. GZB20230166) and Shanghai Post-doctoral Excellence Program (No. 2023186).}

\keywords{Bergman reproducing kernel, Bergman space, Berezin transform, Dirichlet reproducing kernel, doubling weight, fractional derivative, H\"ormander maximal function, non-tangential maximal function}


\begin{abstract}
In this paper, we establish a sharp off-diagonal pointwise upper estimate for the Bergman reproducing kernel associated with a radial weight on the unit disc. Our proof is self-contained and assumes the weight satisfies a natural one-sided doubling condition on its moments. The standard kernels demonstrate that this estimate is sharp, up to a multiplicative constant, at every point. We also show that the estimate in fact characterizes the class of radial doubling weights under consideration.

As applications, we first obtain optimal $L^p$-mean estimates for certain modified Bergman kernels. This approach recovers the key estimates in [Pel\'aez~et~al., J.~Math.~Pures Appl. 105(2016), 102--130] and [Pel\'aez et al., arxiv.org/pdf/2407.04645] via a novel and more direct proof. Second, we establish novel connections between the non-tangential maximal function of the Berezin transform, the H\"ormander maximal function, and Carleson measures. Notably, these connections are new even in the setting of the standard weighted Bergman spaces. Finally, we extend our main results to higher dimensions, harmonic Bergman kernels, and two-weight fractional derivatives of kernels, the latter of which yields sharp estimates for Dirichlet reproducing kernels.
\end{abstract}

\maketitle

\section{Introduction and main results}

Understanding the behavior of Bergman reproducing kernels is fundamental because these kernels encode, in a very concrete way, both the analytic structure of functions in Bergman spaces and the behavior of the natural operators acting on them. From the function-theoretic point of view, the reproducing kernel represents point evaluation, and consequently, precise estimates for the kernel translate directly into pointwise bounds, growth properties, boundary behavior, and regularity of Bergman space functions. Off-diagonal estimates, in particular, quantify how values of analytic functions at different points interact, which is essential for understanding localization phenomena and the geometry induced by the space. From the operator-theoretic perspective, many central operators on Bergman spaces -- such as Bergman projections, Toeplitz operators, Hankel operators, and Berezin transforms -- can be written explicitly in terms of the reproducing kernel. Mapping properties, boundedness, compactness, and Schatten-class behavior of these operators depend sensitively on kernel estimates. Moreover, the asymptotic and off-diagonal behavior of Bergman kernels reflects the underlying geometry of the domain and the weight, linking analytic function theory with harmonic analysis. As a result, understanding reproducing kernels provides a unifying framework in which problems from function theory, operator theory, and harmonic analysis on Bergman spaces can be studied simultaneously and effectively.

The purpose of this work is to establish sharp off-diagonal upper estimates for the Bergman reproducing kernels induced by radial doubling weights in the unit disc, and to demonstrate some of their potent applications to operator theory. We begin with necessary definitions. Let~$\H(\D)$ denote the space of all analytic functions in the unit disc $\D=\{z:|z|<1\}$. For~$0<p<\infty$ and a non-negative integrable function $\om$ on $\D$, which we call a weight, the Lebesgue space $L^p_\om$ consists of complex-valued measurable functions $f$ such that
	$$
  \|f\|_{L^p_\omega}^p=\int_\D|f(z)|^p\omega(z)\,dA(z)<\infty,
  $$
where $dA$ denotes the normalized Lebesgue area measure on $\D$. The Bergman space~$A^p_\omega$ is defined as $L^p_\om\cap \H(\D)$.

For each radial weight $\om$, that is $\omega(z)=\omega(|z|)$ for all $z\in\D$, the norm convergence in $A^p_\om$ implies the uniform convergence on compact subsets of~$\D$, and hence each point evaluation $L_z:f\mapsto f(z)$ is a bounded linear functional on $A^p_\om$. The Hilbert space case $p=2$ guarantees the existence of the Bergman reproducing kernels $B^\om_z\in A^2_\om$ satisfying
	\begin{equation}\label{Eq:reproducing}
	L_z(f)
	=f(z)
	=\langle f, B^\om_z\rangle_{A^2_\om}
	=\int_\D f\overline{B^\om_z}\om\,dA,\quad z\in\D,\quad f\in A^2_\om,
	\end{equation}
an identity that remains valid for functions in $A^1_\om$. The kernel has the representation $B^\om_z(\z)=\sum \overline{e_n(z)}e_n(\z)$ for each orthonormal basis $\{e_n\}$ of~$A^2_\om$. This identity yields the formula
	\begin{equation}\label{kernel}
	B^\om_z(\z)
	=\frac12\sum_{n=0}^\infty\frac{\left(\overline{z}\z\right)^n}{\om_{2n+1}},\quad \om_x=\int_0^1r^x\om(r)\,dr,\quad z,\zeta\in\D,
	\end{equation}
when applied to the basis formed by the normalized monomials. In the case of the classical weighted Bergman space $A^2_\alpha$, induced by the standard radial weight $\om_\alpha(z)=(\alpha+1)(1-|z|^2)^\alpha$ with $-1<\alpha<\infty$, this infinite sum reduces to the neat identity
	$$
	B^{\om_\alpha}_z(\zeta)=B^\alpha_z(\zeta)=\frac1{(1-\overline{z}\zeta)^{\alpha+2}},\quad z,\zeta\in\D.
	$$

The case of the diagonal values $z=\zeta$ is straightforward because $B^\om_z(z)$ equals $\|B^\om_z\|^2_{A^2_\om}$. Consequently, the reproducing formula~\eqref{Eq:reproducing} together with the Cauchy-Schwarz inequality yields
	$$
	|B^\om_z(\z)|^2
	\le\|B^\om_z\|_{A^2_\om}^2\|B^\om_\z\|_{A^2_\om}^2
	=B^\om_z(z)B^\om_\zeta(\zeta),\quad z,\z\in\D.
	$$
This trivial estimate provides very limited information about the kernel's off-diagonal behavior, as it gives no control over the dependence on the argument of $\overline{z}\z$. Therefore the real difficulty in obtaining sharp upper estimates for $|B^\om_z(\z)|$ lies in controlling the infinite sum appearing in \eqref{kernel} accurately. To understand this, we consider the standard kernels $B^\alpha_z$ as a toy model, keeping in mind that their neat expression comes from an infinite sum. In view of \eqref{kernel}, each radial weight $\omega$ induces the associated reproducing kernel via its odd moments, and therefore it is natural to expect that the moments somehow concretely dictate the behavior of $|B^\om_z(\z)|$ as well. The moments of the standard radial weights $\omega_\alpha$ satisfy $(\omega_\alpha)_x\asymp x^{-(\alpha+1)}$, and hence we may write
	\begin{equation}\label{adnffdnlffn}
	|B^\alpha_z(\z)|\asymp\frac{1}{|1-\overline{z}\zeta|(\om_\alpha)_{\frac{2}{|1-\overline{z}\zeta|}}},\quad z,\zeta\in\D.
	\end{equation}
This interpretation preserves information on the argument of $\overline{z}\zeta$, relates the pointwise behavior of the kernel to the moments of the inducing weight and causes little loss of information as it is sharp up to a multiplicative constant at every point of $\D\times\D$. One immediately realizes that such an asymptotic relation can not be true at every point in general because kernels might have zeros~\cite{Per}. Motivated by this observation, it is natural to ask under which assumptions on the inducing weight the reproducing kernel satisfies an upper bound of the form given by the right-hand side of \eqref{adnffdnlffn}, and conversely, whether such a pointwise bound imposes structural restrictions on the weight. An ideal situation is one in which the same condition characterizes both directions. In this paper, we show that this is indeed the case: the above pointwise estimate holds precisely for a distinguished class of radial weights, yielding a complete characterization of those weights for which such off-diagonal kernel bounds are valid.

We now proceed to the statement of our main result. We say that a radial weight $\om$ belongs to $\DD$ if there exists a constant $C=C(\om)>0$ such that
	\begin{equation}\label{D-hat}
	\om_x\le C\om_{2x},\quad 1\le x<\infty.
	\end{equation}
It is known that this (one-sided) doubling condition arises naturally in the operator theory of the weighted Bergman spaces \cite{PelRat,PelRatMathAnn,PR2016,PR2021}, and our discoveries will further enforce its role. Namely, this doubling property provides precisely the structural constrain needed to control the infinite sum appearing in~\eqref{kernel} accurately, so as to obtain the right-hand side of~\eqref{adnffdnlffn} as an upper bound. The following theorem is the main result of this paper. Here, as usual, $H^\infty$ denotes the space of bounded analytic functions in $\D$, and $\B$ stands for the classical Bloch space.

\begin{theorem}\label{Theorem:off-diagonal kernel}
Let $\om$ be a radial weight and $n\in\N_0=\N\cup\{0\}$. Then the following statements are equivalent:
	\begin{enumerate}
	\item[\rm(i)] $\displaystyle \left|\left(B^\om_{a}\right)^{(n)}(z)\right|
	\lesssim|a|^n\frac{\kappa(a,z)^{n+1}}{\om_{\kappa(a,z)}},\quad \kappa(a,z)=\frac2{|1-\overline{a}z|},\quad a,z\in\mathbb{D}$;
	\item[\rm(ii)] $\displaystyle \left\|\left(B^\om_{a}\right)^{(n)}\right\|_{H^{\infty}}
	\lesssim \frac{|a|^n}{\om_{\frac{1}{1-|a|}}(1-|a|)^{n+1}},\quad a\in\mathbb{D}$;
	\item[\rm(iii)] $\displaystyle \left\|\left(B^\om_{a}\right)^{(n)}\right\|_{\B}
	\lesssim\frac{|a|^n}{\om_{\frac{1}{1-|a|}}(1-|a|)^{n+1}},\quad a\in\mathbb{D}$;
	\item[\rm(iv)] $\displaystyle\left\|\left(B^\om_{a}\right)^{(n)}\right\|_{A^2_\om}^2
	\lesssim \frac{|a|^{2n}}{\om_{\frac{1}{1-|a|}}(1-|a|)^{2n+1}},\quad a\in\mathbb{D}$;
	\item[\rm(v)] $\om\in\DD$.
	\end{enumerate}
\end{theorem}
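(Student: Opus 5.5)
The plan is to close the cycle (v)$\Rightarrow$(i)$\Rightarrow$(ii)$\Rightarrow$(iii)$\Rightarrow$(iv)$\Rightarrow$(v). The hard step is (v)$\Rightarrow$(i). The steps (i)$\Rightarrow$(ii)$\Rightarrow$(iii)$\Rightarrow$(iv) should follow from monotonicity of moments, Cauchy-type estimates and Parseval. The converse (iv)$\Rightarrow$(v) should follow by testing the series \eqref{kernel} at the diagonal.

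\textbf{(v)$\Rightarrow$(i).} Write $\lambda=\overline{a}z$ and differentiate \eqref{kernel} termwise:
$$(B^\om_a)^{(n)}(z)=\tfrac12\overline{a}^n\sum_{k\ge n}\frac{k!}{(k-n)!}\frac{\lambda^{k-n}}{\om_{2k+1}}.$$
If $|1-\lambda|$ is bounded below, the bound is trivial, since then the sum is dominated by a fixed multiple of $|a|^n$. Otherwise set $N\asymp 1/|1-\lambda|$ and split the sum at $N$. For the head $k\le N$ I bound crudely by absolute values. The one-sided doubling condition \eqref{D-hat} gives $\om_{x}/\om_{y}\lesssim (y/x)^{\beta}$ for $1\le x\le y$ and some $\beta>0$. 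Hence $\sum_{k\le N}k^n/\om_{2k+1}\lesssim N^{n+1}/\om_{N}$. For the tail I would use summation by parts $m$ times, with $m>\beta+n$, against the kernel $\lambda^{k}$. Each application gains a factor $1/|1-\lambda|$ and replaces the coefficient $c_k=k!/((k-n)!\,\om_{2k+1})$ by its $m$-th forward difference. The \textbf{main obstacle} is controlling these differences. For a general $\DD$ weight the sequence $1/\om_{2k+1}$ is increasing but not smooth, so its differences need not be small.

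To overcome this, I would first try to avoid differencing $1/\om_k$ directly. Note that $1/\om_{2k+1}$ is log-convex-ish: by Hölder, $\om_x$ is log-convex in $x$. I would instead represent the tail via $\sum_k \lambda^k/\om_{2k+1}$ compared with a smoothed model. Concretely, take $\widehat\om(r)=\int_r^1\om$ and use $\om_{x}= x\int_0^1 r^{x-1}\widehat\om(r)\,dr$. Then write $1/\om_{2k+1}$ through the generating identity $\sum_k \om_{2k+1}\lambda^k$ and positivity. If that fails, I would fall back on a dyadic block decomposition $k\in[2^j,2^{j+1})$. On each block, monotonicity of $1/\om_{2k+1}$ gives an Abel summation with a single difference. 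The boundary terms are $\lesssim 2^{jn}/(\om_{2^{j}}|1-\lambda|)$, and the total variation within the block is $\lesssim 2^{jn}/\om_{2^{j+1}}$. Summing over $2^j\ge N$ gives a geometric series by doubling. This holds provided only one summation by parts is needed, and for $n\ge0$ that is $N^{n}/(\om_N|1-\lambda|)\asymp N^{n+1}/\om_N$. This gives (i), with $\kappa\asymp N$ and $\om_N\asymp\om_{\kappa}$.

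\textbf{(i)$\Rightarrow$(ii)$\Rightarrow$(iii)$\Rightarrow$(iv).} For (i)$\Rightarrow$(ii), we have $\kappa(a,z)\le 2/(1-|a|)$, and $x\mapsto x^{n+1}/\om_x$ is increasing. For (ii)$\Rightarrow$(iii), apply the same reasoning to $(B^\om_a)^{(n+1)}$, or use $\|f\|_\B\lesssim\|f\|_{H^\infty}$. For (iii)$\Rightarrow$(iv), Bloch functions have $M_\infty(r,f)\lesssim \log\frac{e}{1-r}\,\|f\|_\B$, but a log loss appears. So I would instead compute (iv) directly through the Parseval identity $\|(B^\om_a)^{(n)}\|^2_{A^2_\om}\asymp|a|^{2n}\sum_k k^{2n}|a|^{2k}/\om_{2k+1}$. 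I would then bound this by the coefficient bounds $k^{n}/\om_{2k+1}\lesssim$ (Bloch norm)$\cdot k$ coming from (iii), which are valid for $k\lesssim 1/(1-|a|)$, with the geometric factor $|a|^{2k}$ handling large $k$.

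\textbf{(iv)$\Rightarrow$(v).} Keep only the terms $k\in[N,2N]$ with $N=1/(1-|a|)$ in the Parseval sum. This gives $N^{2n+1}/\om_{4N}\lesssim N^{2n+1}/\om_{N}$, hence $\om_N\lesssim\om_{4N}$, which implies \eqref{D-hat}.
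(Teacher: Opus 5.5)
\textbf{There is a genuine gap in (v)$\Rightarrow$(i), the only hard step.} You correctly see that the tail has to be summed by parts many times ($m>\beta+n+1$ is what makes the tail series converge; $m>\beta+n$ is not quite enough). But you then assert that for $\omega\in\DD$ the differences of $1/\omega_{2k+1}$ ``need not be small.'' That is false, and controlling them is precisely the missing idea.

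The moments are smooth in the real variable $x$. For $x\ge1$,
$\bigl|\frac{d^k}{dx^k}\omega_x\bigr|=\int_0^1 r^x\bigl(\log\frac1r\bigr)^k\omega(r)\,dr\lesssim\omega_x/x^k$,
because $x^k(\omega_{[k]})_x\lesssim\omega_x$ with $\omega_{[k]}(r)=\omega(r)(1-r)^k$ is a standard property of weights in $\DD$. Fa\`a di Bruno's formula then gives $\bigl|\frac{d^m}{dx^m}\frac{1}{\omega_x}\bigr|\lesssim\frac{1}{\omega_x x^m}$. The mean value theorem for divided differences (Lagrange's theorem) transfers this to the sequence: its $m$-th backward differences are $\lesssim\frac{1}{\omega_k k^m}$.

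Only with this bound does multiplying the tail by $(1-\lambda)^m$ work. The main part is then $\lesssim\sum_{k\ge N}\frac{k^{n-m}}{\omega_k}\lesssim\frac{N^{n+1-m}}{\omega_N}$. The finitely many boundary terms created near $k=N$ must still be shown to obey the same bound. The paper does this by induction on $m$, using the same derivative estimate. It also reduces $n\ge1$ to $n=0$ via $(B^\omega_a)^{(n)}=\overline a^{\,n}B^{V_{\omega,n}}_a$ with $(V_{\omega,n})_x\asymp\omega_x/x^n$.

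\textbf{Your fallback cannot replace this step.} A single Abel summation on the block $[2^j,2^{j+1})$ gives a contribution of order $\frac{2^{jn}|\lambda|^{2^j}}{|1-\lambda|\,\omega_{2^j}}$. These numbers \emph{increase} with $j$ as long as $|\lambda|^{2^j}$ is not small, since $1/\omega_x$ is increasing. Doubling only limits how fast they grow; it produces no geometric decay.

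The resulting bound is therefore at least of order $\frac{1}{|1-\lambda|(1-|\lambda|)^n\omega_{1/(1-|\lambda|)}}$. This is much larger than $\kappa^{n+1}/\omega_\kappa$ whenever $1-|\lambda|\ll|1-\lambda|$. Already for $\omega\equiv1$ and $n=0$ it gives $\frac{1}{|1-\lambda|(1-|\lambda|)}$ instead of $|1-\lambda|^{-2}$. Your proviso ``provided only one summation by parts is needed'' is never met: $k^n/\omega_{2k+1}$ increases to infinity, so it has infinite total variation on $[N,\infty)$ for every weight.

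\textbf{Smaller issues.}
\begin{itemize}
\item (i)$\Rightarrow$(ii): monotonicity only gives (ii) with $\omega_{2/(1-|a|)}$ in place of $\omega_{1/(1-|a|)}$. That is weaker than (ii) unless doubling is already known.
\item (iii)$\Rightarrow$(iv): the uniform coefficient bound $|\widehat f(k)|\lesssim\|f\|_{\mathcal B}$ loses a factor $(1-|a|)^{-1}$.
\end{itemize}
The paper avoids both problems. It runs (i)$\Rightarrow$(ii)$\Rightarrow$(iii) with the index $2/(1-|a|)$. It then extracts $\omega_x\lesssim\omega_{2x}$ from $(1-|a|^2)\bigl|(B^\omega_a)^{(n+1)}(a)\bigr|$, where all Taylor terms are positive. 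Finally it obtains (iv) from (v) by estimating the Parseval sum directly.

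In (iv)$\Rightarrow$(v), the block $k\in[N,2N]$ gives the lower bound $N^{2n+1}/\omega_{2N+1}$, not $N^{2n+1}/\omega_{4N}$. The conclusion $\omega\in\DD$ is unaffected.
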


It is clear from the proof of the theorem that we may replace ``$\lesssim$'' in the statements~(ii)-(iv) by~``$\asymp$''; however, we have stated the theorem as it is in order to emphasize where the essential information lies. Another immediate remark regarding the statement concerns the fact that the class~$\DD$ can be equally well characterized by the tail integrals $\widehat{\nu}(z)=\int_{|z|}^1\nu(t)\,dt$ by means of the doubling condition $\widehat{\dot\om}(r)\lesssim\widehat{\dot\om}\left(\frac{1+r}{2}\right)$ where $\dot\om(z)=\om(z)|z|$ for all $z\in\D$, and in particular,~$\om\in\DD$ if and only if $\om_{\frac{1}{1-r}}\asymp\widehat{\dot\om}(r)$ by~\cite[Lemma~2.1]{PelSum14}. Therefore, under the hypothesis $\om\in\DD$, we may replace, without any essential loss of information, the moments appearing on the right-hand side of (i)--(iv) by the corresponding tail integrals, that are often simpler to work with. This simple observation has many useful consequences, some of which will be discussed a bit later in this work.

The self-contained proof of Theorem~\ref{Theorem:off-diagonal kernel} is presented in the next section, divided into several parts. Among these, the most delicate step is showing that the pointwise upper estimate (i) holds for each $\omega\in\DD$. Quite surprisingly, this step can be settled by a combination of rather elementary tools such as Lagrange's theorem for backward differences, Fa\`a di Bruno's formula (an identity that generalizes the chain rule to higher derivatives), and known characterizations of weights in $\DD$. Using these basic ingredients, we first prove that (v) implies~(i) in the base case $n=0$. An analogue for higher-order derivatives is then obtained via the identity
	\[
	(B_{z}^{\om})^{(n)}(\z)=\overline{z}^{n}B_{z}^{V_{\om,n}}(\z),\quad z,\z\in\D,\quad n\in\N\cup\{0\},
	\]
which neatly relates the $n$-th derivative of the kernel to another kernel induced by $V_{\om,n}\in\DD$, a weight appropriately associated with $\omega$ and $n$. Then trivially (i) implies (ii), and the Schwarz-Pick theorem shows that (iii) follows from (ii), while (v) is obtained from (iii) through direct but elementary computations. Finally, the equivalence between (iv) and (v) is a consequence of Parseval's identity and pretty straightforward estimates. Several known characterizations of weights in~$\DD$ are also used throughout the proof. In Section~\ref{sec4} we demonstrate that the method employed in the proof of Theorem~\ref{Theorem:off-diagonal kernel} is immune to the underlying dimension, works for harmonic Bergman kernels, and applies equally well for the two-weight fractional derivatives instead of the standard ones, yielding sharp upper estimates for Dirichlet reproducing kernels.

H.~Hedenmalm~\cite{H} established an off-diagonal estimate for the Bergman reproducing kernels induced by weights $\omega$ for which $\log\omega$ is subharmonic, refining an earlier result of S.~Shimorin~\cite{S}, by showing that
	\begin{equation}\label{HH}
	\frac12B^\om_a(a)\frac{(1-|a|^2)^2}{|1-\overline{a}z|^2}\le|B^\om_a(z)|\le\frac{2}{|1-\overline{a}z|^2},\quad a,z\in\D.
	\end{equation}
Another estimate of similar nature was obtained in \cite[Proposition~4.5]{S2}. The differences between \eqref{HH} and Theorem~\ref{Theorem:off-diagonal kernel} are fundamental. First, the upper (as well as the lower) bound in \eqref{HH} depends only on the class of weights, with a precise constant factor, whereas the estimate in Theorem~\ref{Theorem:off-diagonal kernel}(i) is accurately dictated by the weight itself. Moreover, the kernels induced by logarithmically subharmonic weights are zero-free~\cite{DKS,H,S2}, as the lower bound in \eqref{HH} also confirms, while the kernels associated with weights in $\DD$ are certainly not in general~\cite{Per}. Therefore no global lower bound for $|B^\om_z(\z)|$ can be expected for arbitrary pairs $(z,\z)\in\D\times\D$ when $\omega\in\DD$, but sharp local lower estimates for $|B^\om_z(\z)|$ near the diagonal can be found in \cite[Lemma~7 and Lemma~8]{PRS2}. Second, the subharmonicity of $\log \omega$ is definitely a local property while weights in $\DD$ do not exhibit any such regularity because they are defined by the doubling condition~\eqref{D-hat} on moments, and, in fact, they may admit a strong oscillatory behavior and may even vanish on a large part of each outer annulus of $\D$ by \cite[Proposition~3]{PR2023}. It is still worth emphasizing that, unlike weights in $\DD$, logarithmically subharmonic weights are not radial and that is another significant difference between these two classes. Therefore it does not come as a surprise that the deep methods developed in \cite{H}, \cite{S} and \cite{S2} are more involved than those used to establish Theorem~\ref{Theorem:off-diagonal kernel}, although obviously they do not apply in the setting of $\DD$ due to the absence of local control over the weights.

We now turn to demonstrate the usefulness of our findings by presenting several of their consequences. Our first goal is to apply Theorem~\ref{Theorem:off-diagonal kernel} to establish a fairly sharp estimate for the $L^p$-means of certain modified Bergman kernels. For $n\in\N_0$ and $\beta\in\RR$, denote
		$$
		(B^{\om}_a)_{[\beta]}^{(n)}(z)
		=(B^{\om}_{a})^{(n)}(z)(1-\overline{a}z)^{\beta},\quad a,z\in\D,
		$$
and, for $0<p<\infty$ and $0<r<1$, define
		$$
		M_p^p(r,f)=\frac{1}{2\pi}\int_{0}^{2\pi}|f(re^{it})|^p\,dt,\quad f\in\H(\D).
		$$
Recall that $\om\in\DD$ if and only if $\om_{\frac{1}{1-r}}\asymp\widehat{\dot\om}(r)$ for all $0\le r<1$~\cite[Lemma~2.1]{PelSum14}.
This explains the appearance of $\widehat{\dot\om}(t)$ in the statement of Corollary~\ref{Corollary:Integrated kernel estimates}, and shows that it can be replaced by the moment $\om_{\frac1{1-t}}$.

\begin{corollary}\label{Corollary:Integrated kernel estimates}
Let $0<p<\infty$, $n\in\N\cup\{0\}$, $\beta\in\RR$ and $\om\in\DD$, and let $\eta$ be a radial weight. Then
	$$
	M_p^p\left(r,\left(B^{\om}_a\right)_{[\beta]}^{(n)}\right)
	\lesssim|a|^{np}\left(\int_0^{|a|r}\frac{dt}{\widehat{\dot\om}(t)^p(1-t)^{p(n+1-\beta)}}+1\right),\quad a\in\D,
	$$
for all $0<r<1$, and
	$$
	\left\|\left(B^{\om}_a\right)_{[\beta]}^{(n)}\right\|_{A^p_\eta}^p
	\lesssim|a|^{np}\left(\int_0^{|a|}\frac{\widehat{\dot\eta}(t)}{\widehat{\dot\om}(t)^p(1-t)^{p(n+1-\beta)}}\,dt+1\right),\quad a\in\D.
	$$
\end{corollary}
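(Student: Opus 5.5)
The plan is to insert the pointwise bound of Theorem~\ref{Theorem:off-diagonal kernel}(i) directly into the integral means and then integrate in polar coordinates. Write $\kappa=\kappa(a,z)=2/|1-\overline{a}z|$. By (i), together with $\om_{\kappa}\asymp\widehat{\dot\om}(1-1/\kappa)$ (from \cite[Lemma~2.1]{PelSum14}, valid since $\om\in\DD$ and $\kappa\ge1$), we get
\begin{equation*}
\left|\left(B^{\om}_a\right)_{[\beta]}^{(n)}(z)\right|\lesssim \frac{|a|^n}{\widehat{\dot\om}\left(1-\frac{|1-\overline{a}z|}{2}\right)|1-\overline{a}z|^{n+1-\beta}},\quad a,z\in\D.
\end{equation*}
Put $s=1-|1-\overline{a}z|/2\in[0,1)$. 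Then $1-s\asymp|1-\overline{a}z|$. Taking $p$-th powers, the integrand in $M_p^p(r,\cdot)$ is dominated by $|a|^{np}\,\Phi(|1-|a|re^{i\theta}|)$, where $\Phi(x)=\widehat{\dot\om}(1-x/2)^{-p}x^{-p(n+1-\beta)}$ and $\theta$ is the angle after rotating $a$ to the positive axis.

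The main step is to estimate $\frac1{2\pi}\int_0^{2\pi}\Phi(|1-\rho e^{i\theta}|)\,d\theta$ with $\rho=|a|r$, using the standard decomposition. On the set $|\theta|\le 1-\rho$ we have $|1-\rho e^{i\theta}|\asymp1-\rho$. On $1-\rho<|\theta|\le\pi$ we have $|1-\rho e^{i\theta}|\asymp|\theta|$. The map $x\mapsto\widehat{\dot\om}(1-x/2)$ is monotone and doubling by the characterization of $\DD$ recalled after Theorem~\ref{Theorem:off-diagonal kernel}, so comparable arguments give comparable values. Hence the mean is
\begin{equation*}
\lesssim (1-\rho)\Phi(1-\rho)+\int_{1-\rho}^{\pi}\Phi(\theta)\,d\theta .
\end{equation*}
Substituting $t=1-\theta$ (or $t=1-\theta/2$) in the second integral turns it into $\int_0^{\rho}\widehat{\dot\om}(t)^{-p}(1-t)^{-p(n+1-\beta)}\,dt$, plus a bounded contribution from $\theta\in[1,\pi]$. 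That bounded part, together with the region where $|1-\overline az|$ is bounded below, produces the ``$+1$''.

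The boundary term $(1-\rho)\Phi(1-\rho)$ is the delicate point, and I expect it to be the main obstacle. It must be absorbed into the integral, which requires the integral over $[\rho',\rho]$ with $1-\rho'=2(1-\rho)$ to be at least comparable to it. Doubling of $\widehat{\dot\om}$ gives $\widehat{\dot\om}(t)\asymp\widehat{\dot\om}(\rho)$ for $t\in[\rho',\rho]$, and $(1-t)\asymp1-\rho$ there. So $\int_{\rho'}^{\rho}\cdots\,dt\asymp(1-\rho)\Phi(1-\rho)$ regardless of the sign of $p(n+1-\beta)$, and the absorption works. When $\rho'<0$, that is $\rho<1/2$, everything is bounded and falls into the ``$+1$''. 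This proves the first estimate.

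For the second estimate, write $\|f\|_{A^p_\eta}^p=2\int_0^1 M_p^p(r,f)\eta(r)r\,dr$ and insert the first bound. Then swap the order of integration (Fubini), substituting $t=|a|r$:
\begin{equation*}
\int_0^1\eta(r)r\int_0^{|a|r}g(t)\,dt\,dr=\int_0^{|a|}g(t)\,\widehat{\dot\eta}(t/|a|)\,dt\le\int_0^{|a|}g(t)\widehat{\dot\eta}(t)\,dt,
\end{equation*}
with $g(t)=\widehat{\dot\om}(t)^{-p}(1-t)^{-p(n+1-\beta)}$. The last inequality holds because $\widehat{\dot\eta}$ is decreasing and $t/|a|\ge t$. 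The constant term contributes $\widehat{\dot\eta}(0)\lesssim1$. This gives the second estimate without any doubling assumption on $\eta$, which matches the statement.
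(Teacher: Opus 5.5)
Your proposal is correct and follows essentially the same route as the paper. You insert the pointwise bound of Theorem~\ref{Theorem:off-diagonal kernel}(i), replace moments by tail integrals, split the angular integral at $|\theta|=1-\rho$ and $|\theta|=1$, absorb the near-diagonal boundary term into the integral, and get the $A^p_\eta$ bound by Fubini. The differences are cosmetic: the paper works with $\widehat{\nu}$ for $\nu=\om(\sqrt{\cdot})$ and absorbs the boundary term through $\widehat{\nu}(r)\le C\left(\frac{1-r}{1-t}\right)^{\gamma}\widehat{\nu}(t)$, whereas you use plain doubling on $[2\rho-1,\rho]$ and make the Fubini step explicit via $\widehat{\dot\eta}(t/|a|)\le\widehat{\dot\eta}(t)$.
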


These kind of sharp integrated kernel estimates can be interpreted as generalized Forelli-Rudin estimates and have proven highly useful in the operator theory of Bergman spaces induced by doubling weights. They have played a central role in studies of the Bergman projection \cite{PR2016,PR2021} and the closely related Hankel and Toeplitz operators~\cite{DGRW,PR2016/2,PR2025}, as well as in many other instances \cite{DLLS,LW2025,MP2026,MPR2024,PRW}. One can fairly say that, in most cases, any operator involving the modulus of the kernel or its derivative can be simplified without any essential loss of information by using the estimates given in Theorem~\ref{Theorem:off-diagonal kernel}, and $L^p$-integrals of the kernels themselves are efficiently treated with Corollary~\ref{Corollary:Integrated kernel estimates}.

The special case $\beta=0$ of Corollary~\ref{Corollary:Integrated kernel estimates} was established a decade ago in \cite[Theorem~1]{PR2016}, while the recent result \cite[Lemma~5]{PR2025} gives the latter statement for $n=0$, $\beta\in\N$ and $2\le p<\infty$. In contrast to the relatively involved arguments required in \cite{PR2016,PR2025}, the proof of Corollary~\ref{Corollary:Integrated kernel estimates} becomes remarkably straightforward once Theorem~\ref{Theorem:off-diagonal kernel}(i) is established.

Our next application of Theorem~\ref{Theorem:off-diagonal kernel} establishes a new connection between the H\"ormander maximal function and the Berezin transform of an $L^1_\om$-function. This relation may be regarded as an analogue of the classical interplay between the Poisson integral and the Hardy-Littlewood maximal function of an integrable function on the unit circle $\T$. More precisely, for $\varphi\in L^1(\T)$, a fundamental inequality in this setting states that the non-tangential maximal function of the Poisson integral $P(\varphi)$ is dominated by a universal constant times the Hardy-Littlewood maximal function $\M(\varphi)$ on $\T$~\cite[Propositions 7.11.1 and 7.1.7]{Pabook1}. In the setting of $L^1_\om$ over the disc, the H\"ormander maximal function often plays a role similar to that of the Hardy-Littlewood maximal function on the boundary. Recall that the H\"ormander maximal function~\cite{HormanderL67} of $f\in L^1_\om$ is defined by
	$$
	M_{\om}(f)(z)=\sup_{z\in S(a)}\frac{1}{\om\left(S(a)\right)}\int_{S(a)}|f(\z)|\om(\z)\,dA(\z),\quad z\in\D,
	$$
where $S(a)=\left\{re^{it}\in\D: 1-|a|\le r<1, ~|t-\arg a|\le \frac{1-|a|}{2}\right\}$ denotes the Carleson square for $a\in\D\setminus\{0\}$ and $S(0)=\D$. Further, the Berezin transform
		$$
		B_\om(f)(z)=\int_\D f(\zeta)\frac{|B^\om_z(\zeta)|^2}{\|B^\om_z\|_{A^2_\om}^2}\om(\zeta)\,dA(\zeta),\quad z\in\D,
		$$
serves as a natural analogue of the Poisson integral, even though it is not necessarily harmonic. For $0<M<\infty$ and a measurable function $f$, the non-tangential maximal function of $f$ in the (punctured) unit disc is defined as
		$$
		N_M(f)(\z)=\sup_{z\in\G_M(\z)}|f(z)|,\quad \z\in\D\setminus\{0\},
		$$
where $\G_M(\z)=\left\{z\in\D: |\arg\z-\arg z|<M\left(|\z|-{|z|}\right)\right\}$ is a Bergman cone with vertex at $\zeta$.
Our next result shows that the H\"ormander maximal function of $f\in L^1_\om$ dominates the non-tangential maximal function of the Berezin transform $B_\om(|f|)$, and therefore also dominates the Berezin transform itself. To see the broader picture, we consider the operator
	$$
	T_{\delta,\om}(f)(z)
	=\int_\D|f(\zeta)|\frac{|B^\om_z(\zeta)|^{1+\delta}}{\|B^\om_z\|_{A^2_\om}^{2\delta}}\om(\zeta)\,dA(\zeta),\quad z\in\D,\quad f\in L^1_\om,
	$$
where $0\le \delta<\infty$. Clearly, $T_{1,\om}(f)=B_\om(|f|)$ and $T_{0,\om}(f)$ reduces to the maximal Bergman projection $P^+_\omega(|f|)$.

\begin{corollary}\label{Corollary:Maximal function of Berezin transform}
Let $0<\delta,M<\infty$ and $\om\in\DD$. Then $N_M(T_{\delta,\om}(f))\lesssim M_\om(f)$ on $\D$ for all $f\in L^1_\om$. In particular, the non-tangential maximal function of the Berezin transform is dominated by the H\"ormander maximal function, that is, $N_M(B_\om(|f|))\lesssim M_\om(f)$ on~$\D$ for all $f\in L^1_\om$.
\end{corollary}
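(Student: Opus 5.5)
The plan is to bound the kernel of $T_{\delta,\om}$ pointwise with Theorem~\ref{Theorem:off-diagonal kernel}(i) for $n=0$, and then split $\D$ into dyadic regions around the Carleson box attached to the vertex $\zeta$. Fix $\zeta\in\D\setminus\{0\}$ and $z\in\G_M(\zeta)$. Note first that $\|B^\om_z\|_{A^2_\om}^2=B^\om_z(z)\asymp \frac{1}{(1-|z|)\om_{\frac1{1-|z|}}}$ for $\om\in\DD$; the upper bound is (iv) with $n=0$, and the lower bound follows by keeping the terms $n\asymp \frac1{1-|z|}$ in \eqref{kernel}. Combining this with (i), the kernel of $T_{\delta,\om}$ satisfies
$$
\frac{|B^\om_z(u)|^{1+\delta}}{\|B^\om_z\|^{2\delta}}\lesssim \frac{\kappa(z,u)^{1+\delta}}{\om_{\kappa(z,u)}^{1+\delta}}\,\bigl((1-|z|)\om_{\frac1{1-|z|}}\bigr)^{\delta}.
$$
Since $\om\in\DD$, there is $\beta>0$ with $\om_x\lesssim (y/x)^\beta\om_y$ for $1\le x\le y$. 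We also have $|1-\overline{z}u|\gtrsim 1-|z|$, so $\kappa(z,u)\lesssim \frac1{1-|z|}$. Hence $\om_{\frac1{1-|z|}}/\om_{\kappa}\lesssim (\kappa(1-|z|))^{-\beta}$ is only polynomially large, while the factor $(\kappa(1-|z|))^{\delta}$ supplies decay. The kernel is therefore bounded by
$$
\frac{\kappa}{\om_{\kappa}}\,(\kappa(1-|z|))^{\delta}.
$$
The term $\frac{\kappa}{\om_\kappa}$ is comparable to $1/\om(S)$ for a box $S$ of side $|1-\overline{z}u|$, because $\om(S(a))\asymp (1-|a|)\om_{\frac1{1-|a|}}$ for $\om\in\DD$. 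The estimate therefore needs no $\delta$-versus-$\beta$ tradeoff: the polynomial ratio only enters if we compare with $\om_{1/(1-|z|)}$, and we avoid doing so.

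Next, decompose $\D$ as follows. Let $a_0\in\D$ with $\arg a_0=\arg\zeta$ and $1-|a_0|=1-|z|$. For $k\ge0$, let $a_k$ have the same argument and $1-|a_k|=\min(2^k(1-|z|),1)$, and take enlarged boxes $\widetilde S_k=S(a_k)$ scaled by a constant $c(M)$. Set $R_0=\widetilde S_0$ and $R_k=\widetilde S_k\setminus\widetilde S_{k-1}$. On $R_k$ we have $|1-\overline{z}u|\asymp 2^k(1-|z|)$, uniformly in $z\in\G_M(\zeta)$. This is where the cone enters: $|\arg z-\arg\zeta|<M(|\zeta|-|z|)\le M(1-|z|)$, so $z$ sits within $M(1-|z|)$ angularly of the ray through $\zeta$, and $\zeta\in\widetilde S_k$ for all $k$ because $|\zeta|>|z|$. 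Summing over $k$ gives
$$
T_{\delta,\om}(f)(z)\lesssim\sum_k 2^{-k\delta}\frac{1}{\om(\widetilde S_k)}\int_{\widetilde S_k}|f|\om\,dA\lesssim \sum_k 2^{-k\delta}M_\om(f)(\zeta)\lesssim M_\om(f)(\zeta),
$$
using $\om(\widetilde S_k)\asymp\om(S(a_k))$ (doubling again) and $\zeta\in\widetilde S_k$. The enlarged box is a finite union of standard Carleson squares containing $\zeta$ of comparable $\om$-measure, or we simply absorb the enlargement into the maximal function's definition up to constants. Taking the supremum over $z\in\G_M(\zeta)$ gives the claim. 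The case $\delta=1$ is the Berezin statement.

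The main obstacle is the geometry in the second step: checking that $|1-\overline{z}u|\asymp 2^k(1-|z|)$ on $R_k$ and that every $\widetilde S_k$ contains $\zeta$, with constants depending only on $M$. The positivity $\delta>0$ is essential for the convergence of the geometric series; at $\delta=0$ the sum diverges logarithmically, consistent with $P^+_\om$ not being dominated.
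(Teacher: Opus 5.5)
Your argument is essentially the paper's: you use the $\asymp$ form of Theorem~\ref{Theorem:off-diagonal kernel}(iv) together with the pointwise bound (i), split $\D$ into geometrically enlarged Carleson squares on whose successive differences $|1-\overline{z}u|$ is bounded below, and sum a geometric series that converges because $\delta>0$; the paper differs only in centering the squares at $z$, proving $T_{\delta,\om}(f)\lesssim M_\om(f)$ pointwise and then appealing to a ``simple geometric argument'' for the cone, which you build in directly by centering the squares on the ray through $\zeta$. One justification needs fixing: the factor $\bigl(\om_{\frac{1}{1-|z|}}/\om_{\kappa}\bigr)^{\delta}$ is bounded simply because $x\mapsto\om_x$ is nonincreasing and $\kappa\le\frac{2}{1-|z|}$ (doubling absorbs the $2$), whereas your $\beta$-estimate actually controls the reciprocal ratio and, used literally, would only give the factor $(\kappa(1-|z|))^{\delta(1-\beta)}$, which need not decay.
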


If the statement of Corollary~\ref{Corollary:Maximal function of Berezin transform} were true for $\delta=0$, it would imply that the maximal Bergman projection $P^+_\om:L^p_\om\to L^p_\om$ is bounded for each $\om\in\DD$ because $M_\omega$ is bounded on $L^p_\omega$ by~\cite{PelRatMathAnn, PRS1}. However, this is known to be false because for $1<p<\infty$ and $\om\in\DD$, $P^+_\om:L^p_\om\to L^p_\om$ is bounded if and only if $\om\in\DD$ satisfies an additional reverse doubling property~\cite[Theorem~9]{PR2021}. Therefore the statement of Corollary~\ref{Corollary:Maximal function of Berezin transform} fails for $\delta=0$. The proof of the corollary is based on a direct application of the pointwise estimate of Theorem~\ref{Theorem:off-diagonal kernel} and standard geometric arguments involving a suitable partition of the disc.

Corollary~\ref{Corollary:Maximal function of Berezin transform} allows us to characterize the boundedness of $\left(T_{\delta,\om}\left(|\cdot|^{\frac{1}{\alpha}}\right)\right)^{\alpha}:L^p_\omega\to L^p_\mu$ when $\delta>0$ in terms of Carleson measures, provided $\alpha p>1$. This follows because $\left(M_{\om}\left(|\cdot|^{\frac{1}{\alpha}}\right)\right)^{\alpha}:L^p_\omega\to L^p_\mu$ is bounded if and only if $\sup_{S}\frac{\mu(S)}{\om(S)}<\infty$, provided $0<p<\infty$, $\alpha p>1$ and $\om\in\DD$, which is also equivalent to saying that $
\mu$ is a $p$-Carleson measure for $A^p_\om$, i.e. the identity mapping $I:A^p_\om\to L^p_\mu $ is bounded~\cite{PelRatMathAnn, PRS1}. In particular, $M_{\om}$ is bounded on~$L^p_\om$ for each fixed~$1<p<\infty$ and~$\om\in\DD$, and so must be $T_{\delta,\om}$ by Corollary~\ref{Corollary:Maximal function of Berezin transform}.

\begin{corollary}\label{Corollary:B-boundedness}
Let $0<p,\alpha<\infty$, $0<\delta<\infty$ and $\om\in\DD$ such that $\alpha p>1$, and let $\mu$ be a positive Borel measure on $\D$. Then the following statements are equivalent:
	\begin{itemize}
	\item[\rm(i)] $\left(M_{\om}\left(|\cdot|^{\frac{1}{\alpha}}\right)\right)^{\alpha}:L^p_\omega\to L^p_\mu$ is bounded;
	\item[\rm(ii)] $\left(T_{\delta,\om}\left(|\cdot|^{\frac{1}{\alpha}}\right)\right)^{\alpha}:L^p_\omega\to L^p_\mu$ is bounded;
	\item[\rm(iii)] $I:A^p_\om\to L^p_\mu$ is bounded;
	\item[\rm(iv)] $\mathcal{S}=\displaystyle\sup_{S}\frac{\mu(S)}{\om(S)}<\infty$.
	\end{itemize}
Moreover,
    \begin{equation}\label{Eq:operator norm estimates*}
		\begin{split}
    \left\|\left(M_{\om}\left(|\cdot|^{\frac{1}{\alpha}}\right)\right)^{\alpha}\right\|^p_{L^p_\om\to L^p_\mu}
		&\asymp\left\|\left(T_{\delta,\om}\left(|\cdot|^{\frac{1}{\alpha}}\right)\right)^{\alpha}\right\|^p_{L^p_\om\to L^p_\mu}\\
		&\asymp\|I\|_{A^p_\om\to L^p_\mu}^p\asymp\mathcal{S}.
		\end{split}
		\end{equation}
\end{corollary}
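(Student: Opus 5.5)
The plan is to run the cycle (iv)$\Rightarrow$(iii)$\Rightarrow$(i)$\Rightarrow$(iv), closing it with Corollary~\ref{Corollary:Maximal function of Berezin transform} and the known maximal-function results from \cite{PelRatMathAnn,PRS1}. The equivalence (i)$\Leftrightarrow$(iv), together with $\|(M_\om(|\cdot|^{1/\alpha}))^\alpha\|^p\asymp\mathcal S$ when $\alpha p>1$, and (iii)$\Leftrightarrow$(iv) with $\|I\|^p_{A^p_\om\to L^p_\mu}\asymp\mathcal S$, are quoted from \cite{PelRatMathAnn,PRS1} as stated before the corollary. Two things then remain: (i)$\Rightarrow$(ii) with the norm bound, and (ii)$\Rightarrow$(iii) or (iv) with the reverse norm bound.

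For (i)$\Rightarrow$(ii), Corollary~\ref{Corollary:Maximal function of Berezin transform} gives, pointwise on $\D$,
$$
T_{\delta,\om}(|f|^{1/\alpha})(z)\le N_M\bigl(T_{\delta,\om}(|f|^{1/\alpha})\bigr)(z)\lesssim M_\om(|f|^{1/\alpha})(z).
$$
The first inequality holds because $z$ lies in (the closure of) its own cone, or one can note directly that the proof bounds $T_{\delta,\om}$ itself. Raising to the power $\alpha$ and integrating against $\mu$ yields
$$
\|(T_{\delta,\om}(|\cdot|^{1/\alpha}))^\alpha\|_{L^p_\om\to L^p_\mu}\lesssim\|(M_\om(|\cdot|^{1/\alpha}))^\alpha\|_{L^p_\om\to L^p_\mu}.
$$
Here $|f|^{1/\alpha}\in L^1_\om$ is needed only when the right-hand side is finite; by truncation one may assume $f$ bounded with compact support.

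For (ii)$\Rightarrow$(iv), test with $f=\chi_{S(a)}$ for a Carleson square $S(a)$, so $\|f\|^p_{L^p_\om}=\om(S(a))$. The main step, and the main obstacle, is a lower bound
$$
T_{\delta,\om}(\chi_{S(a)})(z)\gtrsim 1,\qquad z\in S(a)\ \text{(or a fixed portion of it, such as a top half)}.
$$
For this I would use the local lower estimate near the diagonal, $|B^\om_z(\z)|\gtrsim B^\om_z(z)\asymp\bigl(\om_{\frac1{1-|z|}}(1-|z|)^{2}\bigr)^{-1}$ for $\z$ in a small pseudohyperbolic disc $\Delta(z,r)$, as in \cite[Lemma~7 and Lemma~8]{PRS2}. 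One also needs $\|B^\om_z\|^2_{A^2_\om}=B^\om_z(z)$ and $\om(\Delta(z,r)\cap S(a))\gtrsim\om(S(z))$, which requires care because $\om$ may vanish on annuli. The latter should be handled by integrating in full angular sectors: I would use the disc $\{\z: |1-\cona\z|\ \text{comparable}\}$ region rather than a hyperbolic disc, and invoke $\om\in\DD$ via $\widehat{\om}(r)\lesssim\widehat\om(\frac{1+r}{2})$. Combining these gives
$$
T_{\delta,\om}(\chi_{S(a)})(z)\gtrsim B^\om_z(z)^{1-\delta}\,B^\om_z(z)^{\delta}\,\om(S(z))\,\om_{\frac1{1-|z|}}(1-|z|)^2\cdots\gtrsim1.
$$
If pointwise lower bounds fail, a fallback is to choose $z$ only in the set where $|1-\overline z\z|\asymp 1-|a|$ and to use the continuity of the kernel on compact pieces after rescaling.

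Granting the lower bound, $\mu(S(a))\lesssim\|(T_{\delta,\om}(\chi_{S(a)}))^\alpha\|^p_{L^p_\mu}\le\|T\|^p\,\om(S(a))$, which gives $\mathcal S\lesssim\|(T_{\delta,\om}(|\cdot|^{1/\alpha}))^\alpha\|^p$. If the bound holds only on the top half $S^*(a)$, first obtain $\mu(S^*(a))\lesssim\|T\|^p\om(S(a))$. Then sum over a dyadic decomposition of $S(a)$ into top halves of subsquares, using $\om(S(b))\asymp\om_{\frac1{1-|b|}}(1-|b|)$ and the doubling $\om\in\DD$, which gives geometric decay $\om(S(b))\lesssim 2^{-k\beta}\om(S(a))$ only with reverse doubling. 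To avoid needing reverse doubling, I prefer to establish the lower bound on all of $S(a)$ directly by restricting to the subset of $S(a)$ near $z$ and noting that $z\in S(a)$ implies $S(z')\subset S(a)$ for a comparable square $z'$. The chain of norm equivalences in \eqref{Eq:operator norm estimates*} then follows.
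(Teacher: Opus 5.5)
You follow the paper's architecture. You quote \cite{PelRatMathAnn} for (i)$\Leftrightarrow$(iii)$\Leftrightarrow$(iv), use Corollary~\ref{Corollary:Maximal function of Berezin transform} for (i)$\Rightarrow$(ii) (its proof bounds $T_{\delta,\om}$ itself), and test the operator for (ii)$\Rightarrow$(iv). Testing with $\chi_{S(a)}$ instead of the paper's $f_a=\bigl(\frac{1-|a|}{1-\cona z}\bigr)^\eta$ is legitimate and slightly simpler, because the operator acts on all of $L^p_\om$ and $\|\chi_{S(a)}\|^p_{L^p_\om}=\om(S(a))$ exactly. Your truncation remark is unnecessary, since $\alpha p>1$ and integrability of $\om$ give $|f|^{1/\alpha}\in L^{\alpha p}_\om\subset L^1_\om$.

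The gap is the one substantive step: the lower bound $T_{\delta,\om}(\chi_{S(a)})\gtrsim 1$ on $S(a)$, which you never pin down correctly. First, $B^\om_z(z)\asymp\bigl(\om_{\frac1{1-|z|}}(1-|z|)\bigr)^{-1}\asymp\om(S(z))^{-1}$, not the expression with $(1-|z|)^2$. With your formula the displayed chain does not produce a quantity $\asymp 1$. Second, none of your candidate regions is both of $\om$-measure $\asymp\om(S(z))$ and a set where $|B^\om_z|\gtrsim B^\om_z(z)$ is known. Pseudohyperbolic discs may be $\om$-null, as you note. On a region described only by $|1-\conz\z|\asymp 1-|z|$, or on an arbitrary comparable square $S(z')\subset S(a)$, Theorem~\ref{Theorem:off-diagonal kernel} gives only upper bounds, and kernels of $\DD$-weights need not be zero-free. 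The continuity/rescaling fallback gives no uniformity in $z$.

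What is needed is the specific box $S(z_\e)$, with $z_\e=(1-\e(1-|z|))e^{i\arg z}$ and $\e=\e(\om)$ small. This is exactly what the paper takes from \cite[Lemma~7]{PRS2}, and you can also derive it from Theorem~\ref{Theorem:off-diagonal kernel}:
\begin{itemize}
\item For $\z=\rho e^{i\theta}\in S(z_\e)$ we have $\rho\ge|z|$. Positivity of the Taylor coefficients gives $B^\om_z(\rho e^{i\arg z})=\frac12\sum_n\frac{(|z|\rho)^n}{\om_{2n+1}}\ge B^\om_z(z)$.
\item Integrate the case $n=1$ of Theorem~\ref{Theorem:off-diagonal kernel}(i) along the arc of length at most $\e(1-|z|)/2$, on which $\kappa(z,\cdot)\le\frac{2}{1-|z|}$. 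This gives
$$
\bigl|B^\om_z(\z)-B^\om_z(\rho e^{i\arg z})\bigr|\lesssim\e(1-|z|)\frac{1}{(1-|z|)^2\om_{\frac1{1-|z|}}}\asymp\e B^\om_z(z).
$$
Hence $|B^\om_z|\ge\frac12 B^\om_z(z)$ on $S(z_\e)$ once $\e$ is small.
\item Because this box reaches the boundary, $\om(S(z_\e))\asymp\om(S(z))$ by $\om\in\DD$.
\item For every $z\in S(a)$, at least half of $S(z_\e)$ (in angle) lies in $S(a)$, and by radiality this half carries at least half the $\om$-measure.
\end{itemize}
Together these give $T_{\delta,\om}(\chi_{S(a)})(z)\gtrsim B^\om_z(z)\,\om(S(z))\asymp 1$ for all $z\in S(a)$. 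Therefore $\mathcal S\lesssim\|(T_{\delta,\om}(|\cdot|^{1/\alpha}))^\alpha\|^p_{L^p_\om\to L^p_\mu}$ with no top-half decomposition. You are right that a top-half decomposition would require reverse doubling, and that is why the bound must hold on the whole square.
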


The connection between (ii) and (iii) in the statement of Corollary~\ref{Corollary:B-boundedness} becomes apparent in the case $p>\alpha=1=\delta$. Namely, the reproducing formula for functions in $A^1_\om$ yields $B_\om(f)=f$, and therefore $I:A^p_\om\to L^p_\mu$ is bounded whenever $B_\om:L^p_\om\to L^p_\mu$ is bounded, and $\|I\|_{A^p_\om\to L^p_\mu}\le\|B_\om\|_{L^p_\om\to L^p_\mu}$. The special case $\delta=1$ of Corollary~\ref{Corollary:B-boundedness} improves the recent results \cite[Theorem~5.4]{HJLL} and \cite[Theorem~1]{HLPRW2026} in part by substantially relaxing the hypothesis on the weights considered.

The remainder of the paper is organized as follows. Section~\ref{sec2} is devoted to the proof of our main result, Theorem~\ref{Theorem:off-diagonal kernel}, while Section~\ref{sec3} contains the proofs of Corollaries~\ref{Corollary:Integrated kernel estimates}--\ref{Corollary:B-boundedness}. In Section~\ref{sec4}, we present further remarks on the method developed on the way to the proof of the main result, extending it to two-weight fractional derivatives of kernels, to higher dimensions, and to harmonic Bergman kernels.

We finish the introduction with a couple of words about the notation used throughout the paper. The letter $C=C(\cdot)$ denotes an absolute constant whose value depends on the parameters indicated in the parenthesis and may change from one occurrence to another. If there exists a constant $C=C(\cdot)>0$ such that $a\le Cb$, then we write either $a\lesssim b$ or $b\gtrsim a$. In particular, if $a\lesssim b$ and $a\gtrsim b$, then we denote $a\asymp b$ and say that $a$ and $b$ are comparable.

\section{Proof of Theorem~\ref{Theorem:off-diagonal kernel}}\label{sec2}

To prove our main result, we will need a special case of the mean value theorem for divided differences~\cite[p.~61]{B2005}, commonly used in error analysis in interpolation and approximation theory. When the data points are equispaced, it reduces to so-called Lagrange's theorem, which states that, if $N\in\N$, $h\in\RR$ and a complex-valued function $f\in C^N(\RR)$ are given, then there exists $c=c(f,x,N,h)$ on the closed interval with endpoints $x-Nh$ and~$x$ such that
	$$
	\nabla_h^N(f)(x)=h^Nf^{(N)}(c).
	$$
Here $\nabla_h(f)(x)=f(x)-f(x-h)$ is the backward difference of a real-variable function~$f$ at~$x$, and
		\begin{equation*}
    \nabla_h^N(f)(x)
		=\left(\nabla_h\circ\cdots\circ\nabla_h\right)(f)(x)
		=\sum_{m=0}^N(-1)^m\binom{N}{m}f(x-mh),\quad x,h\in\RR,
		\end{equation*}
as a direct calculation shows.

\medskip

\noindent\emph{Proof of Theorem~\ref{Theorem:off-diagonal kernel}}. Let first $\om\in\DD$, and consider the case $n=0$ in (i). Define $\nu(r)=\om(\sqrt{r})$ for all $0\le r<1$. Then
	\begin{equation}\label{om-nu}
	2\om_{2x+1}
	=\nu_x, \quad 0\le x<\infty,
	\end{equation}
and hence
	$$
	B^\om_a(z)
	=\sum_{n=0}^\infty\frac{(\overline{a}z)^n}{2\om_{2n+1}}
	=\sum_{n=0}^\infty\frac{(\overline{a}z)^n}{\nu_{n}},\quad a,z\in\D.
	$$
Write $\overline{a}z=\zeta$, and denote $\kappa(\zeta)=\frac{2}{|1-\zeta|}$ and
	$$
	f_\nu(\zeta)=\sum_{n=0}^\infty\frac{\zeta^n}{\nu_{n}},\quad \zeta\in\D.
	$$
Further, write $E=E(\zeta)=\lfloor\kappa(\zeta)\rfloor\in\N$, where $\lfloor\kappa\rfloor$ is the largest integer less than or equal to $\kappa\in\RR$. Then
    \begin{equation*}
    f_\nu(\zeta)
    =\sum_{n=0}^{E-1}\frac{\zeta^n}{\nu_n}
    +\sum_{n=E}^{\infty}\frac{\zeta^n}{\nu_n}
		=S_1(E,\zeta)+S_2(E,\zeta),
		\end{equation*}
where
		\begin{equation*}
    |S_1(E,\zeta)|
		\le\sum_{n=0}^{E-1}\frac{1}{\nu_n}
		\le\frac{E}{\nu_{E-1}}
		\le\frac{\kappa(\zeta)}{\nu_{\kappa(\zeta)}},\quad \zeta\in\D.
		\end{equation*}
Let $N\in\N$ to be fixed later. Then the identity $(1-\zeta)^N=\sum_{k=0}^{N}(-1)^k\binom{N}{k}\zeta^k$ and Fubini's theorem yield
		\begin{equation*}
		\begin{split}\allowdisplaybreaks
    |S_2(E,\zeta)(1-\zeta)^N|
    &=\left|\sum_{k=0}^{N}(-1)^k\binom{N}{k}\left(\sum_{n=E+k}^{E+N}+\sum_{n=E+N+1}^{\infty}\right)\frac{\zeta^{n}}{\nu_{n-k}}\right|\\
    &\le\left|\sum_{n=E}^{E+N}\left(\sum_{k=0}^{n-E}(-1)^k\binom{N}{k}\frac{1}{\nu_{n-k}}\right)\zeta^{n}\right|\\
		&\quad+\left|\sum_{n=E+N+1}^{\infty}\left(\sum_{k=0}^{N}(-1)^k\binom{N}{k}\frac{1}{\nu_{n-k}}\right)\zeta^{n}\right|\\
		&=|S_{1,N}(E,\zeta)|+|S_{2,N}(E,\zeta)|,\quad \zeta\in\D.
		\end{split}
		\end{equation*}
Denote $W(x)=W_\nu(x)=\frac{1}{\nu_x}$ for all $x\ge0$, and observe that the inner sum in the last expression is the $N$-th backward difference $\nabla^N(W)$ of the function $W$ at the point~$n$. Therefore Lagrange's theorem ensures the existence of $h_n^N \in [n-N,n]$ such that
	$$
	\nabla^N(W)(n)
	=\sum_{k=0}^{N}(-1)^k\binom{N}{k}\frac{1}{\nu_{n-k}}
	=W^{(N)}(h_n^N), \quad n\ge N.
	$$
Assume for a moment that
		\begin{equation}\label{derivative-estimate}
    |W^{(N)}(x)|\lesssim\frac{1}{\nu_xx^N}, \quad 1\le x<\infty,
		\end{equation}
for each fixed $N\in\N$. Since $\omega$ is doubling by the assumption, so is $\nu$ by \eqref{om-nu}. Therefore \cite[Lemma~2.1(ii)(vi)]{PelSum14} ensures the existence of a $\beta=\beta(\nu)>0$ such that $x\mapsto x^\beta\nu_x$ is essentially increasing on $[1,\infty)$. Define $N=\lfloor\beta\rfloor+2$. Then
    \begin{equation*}
		\begin{split}
    |S_{2,N}(E,\zeta)|
    &\le\sum_{n=E+N+1}^{\infty}|W^{(N)}(h_n^N)||\zeta|^n
    \lesssim\sum_{n=E+N+1}^{\infty}\frac{1}{\nu_nn^N}\\
		&\lesssim\frac{1}{\nu_{E}E^\beta}\sum_{n=E+N+1}^{\infty}\frac{1}{n^{N-\beta}}
    \asymp\frac{\kappa(\zeta)^{1-N}}{\nu_{\kappa(\zeta)}},\quad \zeta\in\D.
		\end{split}
		\end{equation*}
To establish the same upper bound for $|S_{1,N}(E,\zeta)|$, we proceed by induction. Let first $N=2$. Then
		\begin{equation*}
    \begin{split}
		S_{1,2}(E,\zeta)
		&=\frac{\zeta^{E}}{\nu_{E}}
		+\left(\frac{1}{\nu_{E+1}}-\frac{2}{\nu_{E}}\right)\zeta^{E+1}
		+\sum_{k=0}^2(-1)^k\binom{2}{k}\frac{1}{\nu_{E+2-k}}\zeta^{E+2},\quad \zeta\in\D,
		\end{split}
		\end{equation*}
and hence
		\begin{equation}\label{sdgf}
		\begin{split}
    |S_{1,2}(E,\zeta)|
		&\le\frac{|\zeta|^E}{\nu_{E}}\left|1-\left(2-\frac{\nu_{E}}{\nu_{E+1}}\right)\zeta\right|
		+\left|\frac{1}{\nu_{E+2}}-\frac{2}{\nu_{E+1}}+\frac{1}{\nu_{E}}\right|\\
    &\le\frac{1}{\nu_{E}}\left|1-\left(2-\frac{\nu_{E}}{\nu_{E+1}}\right)\zeta\right|
		+\left|\frac{1}{\nu_{E+2}}-\frac{1}{\nu_{E+1}}\right|\\
		&\quad+\left|\frac{1}{\nu_{E+1}}-\frac{1}{\nu_{E}}\right|,\quad \zeta\in\D.
    \end{split}
		\end{equation}
These three terms are now estimated separately. Since $\nu\in\DD$ by the assumption, for each $\alpha>0$ there exists a constant $C=C(\nu,\alpha)>0$ such that
	\begin{equation}\label{ksksksk*}
	x^{\alpha}(\nu_{[\alpha]})_x\le C \nu_x, \quad 0< x < \infty,
	\end{equation}
where $\nu_{[\alpha]}$ is defined by $\nu_{[\alpha]}(r)=\nu(r)(1-r)^\alpha$. For this fact and more, see \cite[(1.3)]{PR2021}. For the first term on the right-hand side of \eqref{sdgf} we observe that \eqref{ksksksk*} with $\alpha=1$ yields
	\begin{equation*}
	\begin{split}
	\left|1-\left(2-\frac{\nu_{E}}{\nu_{E+1}}\right)\zeta\right|
	&\le|1-\zeta|+|\zeta|\left(\frac{\nu_{E}}{\nu_{E+1}}-1\right)\\
	&\le|1-\zeta|+\frac{\left(\nu_{[1]}\right)_{E}}{\nu_{E+1}}
	\lesssim\frac1{\kappa(\zeta)},\quad \zeta\in\D.
	\end{split}
	\end{equation*}
This together with equally natural applications of \eqref{ksksksk*} to the other two terms in \eqref{sdgf} yield
	$$
	|S_{1,2}(E,\zeta)|\lesssim\frac1{\nu_{\kappa(\zeta)}\kappa(\zeta)},\quad \zeta\in\D,
	$$
and thus the case $N=2$ is proved. Assume next that
	\begin{equation}\label{Induction---}
	|S_{1,N}(E,\zeta)|
	\lesssim\frac{\kappa(\zeta)^{1-N}}{\nu_{\kappa(\zeta)}},\quad \zeta\in\D,
	\end{equation}
for a fixed $N\in\N$. A direct calculation yields
		\begin{equation*}
    \begin{split}\allowdisplaybreaks
    S_{1,N+1}(E,\zeta)
		&=\frac{\zeta^{E}}{\nu_{E}}
		+\sum_{n=E+1}^{E+N}\left(\sum_{k=0}^{n-E}(-1)^k\binom{N+1}{k}\frac{1}{\nu_{n-k}}\right)\zeta^n\\
		&\quad+\sum_{k=0}^{N+1}(-1)^k\binom{N+1}{k}\frac{\zeta^{E+N+1}}{\nu_{E+N+1-k}}\\
		&=\frac{\zeta^{E}}{\nu_{E}}+\sum_{n=E+1}^{E+N}\frac{\zeta^n}{\nu_{n}}
		+\sum_{n=E+1}^{E+N}\left(\sum_{k=1}^{n-E}(-1)^k\binom{N+1}{k}\frac{1}{\nu_{n-k}}\right)\zeta^n\\
		&\quad+\sum_{k=0}^{N+1}(-1)^k\binom{N+1}{k}\frac{\zeta^{E+N+1}}{\nu_{E+N+1-k}},\quad \zeta\in\D.
    \end{split}
		\end{equation*}
Then the identity $\binom{N+1}{k}=\binom{N}{k}+\binom{N}{k-1}$ together with standard computations imply
	\begin{allowdisplaybreaks}
		\begin{eqnarray*}
    &&S_{1,N+1}(E,\zeta)=(1-\zeta)S_{1,N}(E,\zeta)\\
		&&\quad+\zeta^{E+N+1}\left(\sum_{k=0}^{N+1}(-1)^k\binom{N+1}{k}\frac{1}{\nu_{E+N+1-k}}
		+\sum_{k=0}^N(-1)^k\binom{N}{k}\frac{1}{\nu_{E+N-k}}\right)\\
		&&\quad=(1-\zeta)S_{1,N}(E,\zeta)+\zeta^{E+N+1}S_{3,N}(E),\quad \zeta\in\D.
		\end{eqnarray*}
		\end{allowdisplaybreaks}
The first term is exactly of the type we want. To deal with the second term, we observe that
		\begin{equation*}
    \begin{split}
		S_{3,N}(E)
		&=\sum_{k=0}^{N}(-1)^k\binom{N}{k}\frac{1}{\nu_{E+N+1-k}}.
    \end{split}
		\end{equation*}
By Lagrange's theorem there exists an $h_{E,N}^N\in[E+1,E+N+1]$ such that
	$$
	\sum_{k=0}^{N}(-1)^k\binom{N}{k}\frac{1}{\nu_{E+N+1-k}}=W^{(N)}(h_{E,N}^N).
	$$
By combining these identities we deduce
	\begin{equation*}
  \begin{split}
  S_{1,N+1}(E,\zeta)
	&=(1-\zeta)S_{1,N}(E,\zeta)+\zeta^{E+N+1}W^{(N)}(h_{E,N}^N),\quad\zeta\in\D,
	\end{split}
	\end{equation*}
and hence the induction hypothesis and \eqref{derivative-estimate} finally give
	\begin{equation*}
  \begin{split}
  |S_{1,N+1}(E,\zeta)|
	&\le|1-\zeta||S_{1,N}(E,\zeta)|+|W^{(N)}(h_{E,N}^N)|\\
	&\lesssim|1-\zeta|\frac{\kappa(\zeta)^{1-N}}{\nu_{\kappa(\zeta)}}
	+\frac{1}{\nu_{E}E^N}
	\asymp\frac{\kappa(\zeta)^{-N}}{\nu_{\kappa(\zeta)}},\quad\zeta\in\D.
	\end{split}
	\end{equation*}
Thus \eqref{Induction---} is valid for every fixed $N\in\N$ by induction.
		
It remains to prove \eqref{derivative-estimate}. Let $I_{\lambda}(N)$ denote the set of $\lambda=(\lambda_1,\lambda_2,\ldots,\lambda_N)\in\N_0^{N}$ such that $\sum_{j=1}^{N}j\lambda_j=N$, and define $|\lambda|=\sum_{j=1}^{N}\lambda_j$. Fa\`a Di Bruno's formula states that
		\begin{equation*}
		\begin{split}
    W^{(N)}(x)
		=\sum_{I_{\lambda}(N)}\frac{N!}{\prod_{j=1}^{N}\lambda_{j}!j!^{\lambda_j}}
		\frac{(-1)^{|\lambda|}|\lambda|!}{\nu_{x}^{|\lambda|+1}}\prod_{k=1}^{N}\left(\frac{d^{k}}{dx^{k}}\nu_x\right)^{\lambda_k}.
		\end{split}
		\end{equation*}
By differentiating under the integral sign and using \eqref{ksksksk*}, we obtain
		\begin{equation*}
		\begin{split}
    \left|\frac{d^{k}}{dx^{k}}\nu_x\right|
		&=\int_{0}^{1}r^{x}\left(\log\frac{1}{r}\right)^k\nu(r)\,dr
    \asymp\left(\nu_{[k]}\right)_x
		\lesssim\frac{\nu_{x}}{x^k}, \quad 1\le x<\infty,
		\end{split}
		\end{equation*}
and hence
		\begin{equation*}
		\begin{split}
    |W^{(N)}(x)|
		&\lesssim\sum_{I_{\lambda}(N)}\frac{N!}{\prod_{j=1}^{N}\lambda_{j}!j!^{\lambda_j}}\frac{|\lambda|!}{\nu_{x}^{|\lambda|+1}}
		\prod_{k=1}^{N}\left(\frac{\nu_{x}}{x^{k}}\right)^{\lambda_k}\\
    &=\sum_{I_{\lambda}(N)}\frac{N!}{\prod_{j=1}^{N}\lambda_{j}!j!^{\lambda_j}}\frac{|\lambda|!}{\nu_{x}^{|\lambda|+1}}\frac{\nu_x^{|\lambda|}}{x^N}
		\asymp\frac{1}{\nu_x x^N},\quad 1\le x<\infty,
		\end{split}
		\end{equation*}
for each fixed $N\in\N$. Therefore the estimate \eqref{derivative-estimate} is proved. In view of the identity $\nu_x=2\om_{2x+1}$ this shows that
	$$
	|B^\om_a(z)|
	=|f_\nu(\zeta)|
	\lesssim\frac{\kappa(\zeta)}{\nu_{\kappa(\zeta)}}
	\asymp\frac{\kappa(a,z)}{\om_{\kappa(a,z)}},\quad \kappa(a,z)=\frac2{|1-\overline{a}z|},\quad a,z\in\mathbb{D},
	$$
and thus we have shown that (i) with $n=0$ is satisfied when $\om\in\DD$.

To prove the statement (i) for $n\in\N$, we show that the $n$-th derivative of the Bergman reproducing kernel $B_{z}^{\om}$ is $\overline{z}^{n}$ times another Bergman kernel induced by a weight depending on $\om$ and $n$. To see this, for a radial weight $\om$, define $V_{\om,0}=\om$, and set $V(r)=V_{\om,1}(r)=2\int_{r}^{1}\om(t)t\,dt$ for all $0\le r<1$. Moreover, set
	\begin{equation*}
	\begin{split}
	&V_{\om,n}(z)\\
	&=2^n\int_{|z|}^1r_1\left(\int_{r_1}^1r_2\cdots\left(\int_{r_{n-2}}^1r_{n-1}
	\left(\int_{r_{n-1}}^1\om(r)r\,dr\right)dr_{n-1}\right)\cdots\,dr_2\right)dr_1
	\end{split}
	\end{equation*}
for all $z\in\D$ and $n\in\N$. A direct calculation shows that $(k+1)V_{2k+1}=\om_{2k+3}$ for all $k\in\mathbb{N}\cup\{0\}$, and hence
	\begin{equation*}
	\begin{split}
	(B_{z}^{\om})'(\z)
	=\overline{z}\sum_{k=0}^{\infty}\frac{(k+1)(\overline{z}\z)^{k}}{2\om_{2k+3}}
	=\overline{z}B_{z}^{V}(\z),\quad z,\zeta\in\D.
	\end{split}
	\end{equation*}
By iterating this formula we obtain
	\[
	(B_{z}^{\om})^{(n)}(\z)=\overline{z}^{n}B_{z}^{V_{\om,n}}(\z),\quad z,\z\in\D,\quad n\in\N\cup\{0\}.
	\]
Observe that $\left(V_{\om,n}\right)_x\asymp\frac{\om_x}{x^{n}}$ for $1\le x<\infty$. It then follows from the case $n=0$ that
	$$
	\left|\left(B^\om_{a}\right)^{(n)}(z)\right|\lesssim|a|^n\frac{\kappa(a,z)^{n+1}}{\om_{\kappa(a,z)}},\quad a,z\in\D.
	$$
Therefore we have shown that (v) implies (i).

It is clear that (i) implies
	\begin{equation}\label{(ii')}
	\left\|\left(B^\om_{a}\right)^{(n)}\right\|_{H^{\infty}}
	\lesssim \frac{|a|^n}{\om_{\frac{2}{1-|a|}}(1-|a|)^{n+1}},\quad a\in\mathbb{D},
	\end{equation}
from which we deduce
	\begin{equation}\label{(iii')}
	\left\|\left(B^\om_{a}\right)^{(n)}\right\|_{\B}
	\lesssim\frac{|a|^n}{\om_{\frac{2}{1-|a|}}(1-|a|)^{n+1}},\quad a\in\mathbb{D},
	\end{equation}
by the Schwarz-Pick theorem. Observe that \eqref{(ii')} and \eqref{(iii')} differ from the statements (ii) and (iii) only by the constant factor 2 appearing in the moment index. We will handle this minor issue a bit later.

Assume next \eqref{(iii')} for some $n\in\N\cup\{0\}$. Then
	\begin{equation*}
	\begin{split}
	\frac{|a|^n}{\om_{\frac{2}{1-|a|}}(1-|a|)^{n+1}}
	\gtrsim\left\|\left(B^\om_{a}\right)^{(n)}\right\|_{\B}
	&\ge\frac{|a|^{n+1}(1-|a|)}{2}\sum_{k=1}^\infty\frac{k^{n+1}|a|^{2(k-1)}}{\om_{2(k+n)+1}},\quad a\in\D,
	\end{split}
	\end{equation*}
and hence
	\begin{equation}\label{d.f,ns.,m,mmn}
	\begin{split}
	\frac{1}{\om_{\frac{2}{1-|a|}}(1-|a|)^{n+2}}
	\gtrsim\sum_{k=1}^\infty\frac{k^{n+1}|a|^{2k}}{\om_{2k}},\quad a\in\D.
	\end{split}
	\end{equation}
Let $1<K_1<K_2<\infty$ and observe that
	\begin{equation}\label{lkjflerjlqkerjnv}
	\begin{split}
	\sum_{k=1}^\infty\frac{k^{n+1}|a|^{2k}}{\om_{2k}}
	&\gtrsim\int_1^\infty\frac{x^{n+1}|a|^{2x}}{\om_{2x}}\,dx
	\ge\int_{1-\frac{1-|a|}{K_1}}^{1-\frac{1-|a|}{K_2}}\frac{|a|^{\frac2{1-t}}}{\om_{\frac{2}{1-t}}}\frac{dt}{(1-t)^{n+3}}\\
	&\ge\frac{|a|^{\frac{2K_2}{1-|a|}}}{\om_{\frac{2K_1}{1-|a|}}}\frac{K_1^{n+3}}{(1-|a|)^{n+3}}(1-|a|)\left(\frac1{K_1}-\frac1{K_2}\right)\\
	&\ge\frac{|a|^{\frac{2K_2}{1-|a|}}}{\om_{\frac{2K_1}{1-|a|}}}\frac{1}{(1-|a|)^{n+2}}\frac{K_2-K_1}{K_1K_2},\quad a\in\D.
	\end{split}
	\end{equation}
Choose $K_2=2K_1=4$ and set $x=\frac{2}{1-|a|}$. Then \eqref{d.f,ns.,m,mmn} and \eqref{lkjflerjlqkerjnv} yield $\om_x\lesssim\om_{2x}$ for $x\ge2$, and it follows that $\om\in\DD$. Therefore \eqref{(iii')} implies (v), and thus (i), \eqref{(ii')}, \eqref{(iii')} and (v) are equivalent. From this it easily follows that so are the statements (i)--(iii) and (v) of the theorem.

It remains to show that (iv) is equivalent to the other statements. Parseval's identity for the Hardy space $H^2$ gives
	\begin{equation}\label{dsnvnndvjfdljkaf}
	\left\|\left(B^\om_{a}\right)^{(n)}\right\|_{A^2_\om}^2
	\asymp|a|^{2n}\sum_{k=0}^\infty\frac{(k+1)^{2n}\om_{2k+1}}{\om_{2(k+n)+1}^2}|a|^{2k},\quad a\in\D,
	\end{equation}
for each fixed radial weight $\omega$. If now $\om\in\DD$, then
	\begin{equation}\label{edjlekjvbleaj}
	\begin{split}
	\left\|\left(B^\om_{a}\right)^{(n)}\right\|_{A^2_\om}^2
	&\asymp|a|^{2n}\sum_{k=0}^\infty\frac{(k+1)^{2n}}{\om_{2k+1}}|a|^{2k}
	\asymp|a|^{2n}\left(\int_1^\infty\frac{x^{2n}}{\om_{2x}}|a|^{2x}\,dx+1\right)\\
	&\le|a|^{2n}\left(\int_0^1\frac{|a|^{\frac{1}{1-t}}}{\om_{\frac{1}{1-t}}}\frac{dt}{(1-t)^{2(n+1)}}+1\right),\quad a\in\D.
	\end{split}
	\end{equation}
Clearly,
	\begin{equation}\label{edjlekjvbleaj2}
	\int_0^{|a|}\frac{|a|^{\frac{1}{1-t}}}{\om_{\frac{1}{1-t}}}\frac{dt}{(1-t)^{2(n+1)}}
	\lesssim\frac{1}{\om_{\frac{1}{1-|a|}}(1-|a|)^{2n+1}},\quad a\in\D.
	\end{equation}
To establish the same upper estimate for the integral over $(|a|,1)$, we denote $a_j=1-\frac{1-|a|}{2^j}$ for all $j\in\N_0$. Since $\omega$ is doubling, there exists $\beta=\beta(\om)>0$ such that $x\mapsto x^\beta\om_x$ is essentially increasing on $[1,\infty)$ by \cite[Lemma~2.1(ii)(vi)]{PelSum14}. Therefore
	\begin{equation}\label{edjlekjvbleaj3}
	\begin{split}
	&\int_{|a|}^1\frac{|a|^{\frac{1}{1-t}}}{\om_{\frac{1}{1-t}}}\frac{dt}{(1-t)^{2(n+1)}}\\
	&\quad\lesssim\frac{(1-|a|)^\beta}{\om_{\frac{1}{1-|a|}}}\sum_{j=0}^\infty
	\int_{a_j}^{a_{j+1}}\frac{|a|^{\frac{1}{1-t}}}{(1-t)^{2(n+1)+\beta}}dt\\
	&\quad\le\frac{(1-|a|)^\beta}{\om_{\frac{1}{1-|a|}}}\sum_{j=0}^\infty\frac{|a|^{\frac{1}{1-a_j}}}{(1-a_{j+1})^{2(n+1)+\beta}}(a_{j+1}-a_j)\\
	&\quad\lesssim\frac{1}{\om_{\frac{1}{1-|a|}}(1-|a|)^{2n+1}},\quad a\in\D.
	\end{split}
	\end{equation}
By combining \eqref{edjlekjvbleaj}, \eqref{edjlekjvbleaj2} and \eqref{edjlekjvbleaj3} we obtain (iv).

It remains to show that (iv) implies $\om\in\DD$. By (iv) and \eqref{dsnvnndvjfdljkaf} we have
	$$
	\frac{|a|^{2n}}{\om_{\frac{1}{1-|a|}}(1-|a|)^{2n+1}}
	\gtrsim\left\|\left(B^\om_{a}\right)^{(n)}\right\|_{A^2_\om}^2
	\gtrsim|a|^{2n}\int_1^\infty\frac{x^{2n}}{\om_{2x}}|a|^{2x}\,dx,\quad a\in\D.
	$$
The argument employed in \eqref{lkjflerjlqkerjnv} now yields a suitable lower estimate for the last integral allowing us to deduce $\om\in\DD$. This completes the proof of the theorem.\hfill$\Box$

\medskip

We note here that another natural way to deduce (iv) from the other (equivalent) statements is to apply the pointwise estimate (i) inside the integral of the $A^2_\om$-norm of $\left(B^\om_{a}\right)^{(n)}$, and then estimate this upper bound further to reach the statement (iv). This second approach sounds rough but it works fine. In fact, we will use it efficiently in the proof of Corollary~\ref{Corollary:Integrated kernel estimates} below in a more general setting in order to obtain sharp integral estimates for modified Bergman kernels.

\section{Proofs of Corollaries}\label{sec3}

\noindent\emph{Proof of Corollary~\ref{Corollary:Integrated kernel estimates}.}
Write $\overline{a}z=te^{i\theta}$ and $\nu(r)=\om(\sqrt r)$ for all $0\le r<1$ as in the proof of Theorem~\ref{Theorem:off-diagonal kernel}. Then the said theorem together with the estimate $|1-te^{i\theta}|^2\asymp(1-t)^2+t\theta^2$ implies
	\begin{equation*}
	\begin{split}
	I(a,r)
	&=\int_0^{2\pi}\left|(1-\overline{a}re^{i\varphi})^\beta\left(B^\om_a\right)^{(n)}(re^{i\varphi})\right|^p\,d\varphi\\
	&\lesssim|a|^{np}\int_0^{2\pi}\frac{\kappa(te^{i\theta})^{p(n+1-\beta)}}{\nu_{\kappa(te^{i\theta})}^p}\,d\theta\\
	&\asymp|a|^{np}\int_0^{2\pi}\frac{d\theta}{\widehat{\nu}\left(1-\frac{|1-te^{i\theta}|}{2}\right)^p|1-te^{i\theta}|^{p(n+1-\beta)}}\\
	&\asymp|a|^{np}\left(\int_0^{1-t}+\int_{1-t}^1+\int_1^\pi\right)
	\frac{((1-t)^2+t\theta^2)^{-\frac{p}{2}(n+1-\beta)}}{\widehat{\nu}\left(1-\frac{((1-t)^2+t\theta^2)^\frac12}{4}\right)^p}\,d\theta\\
	&=|a|^{np}\left(I_1(a,r)+I_2(a,r)+I_3(a,r)\right),
	\end{split}
	\end{equation*}
where
	\begin{equation*}
	\begin{split}
	I_1(a,r)
	&\lesssim\frac1{\widehat{\nu}(t)^p(1-t)^{p(n+1-\beta)}}\int_0^{1-t}d\theta\\
	&=\frac1{\widehat{\nu}(t)^p(1-t)^{p(n+1-\beta)-1}},\quad 0<t<1,
	\end{split}
	\end{equation*}
and
	\begin{equation*}
	\begin{split}
	I_2(a,r)
	&\lesssim\int_{1-t}^1\frac{d\theta}{\widehat{\nu}(1-\theta)^p\theta^{p(n+1-\beta)}}+1\\
	&=\int_0^t\frac{ds}{\widehat{\nu}(s)^p(1-s)^{p(n+1-\beta)}}+1,\quad \frac12\le t<1.
	\end{split}
	\end{equation*}
Since $\nu=\om(\sqrt\cdot)$ and $\om\in\DD$ by the hypothesis, \cite[Lemma~2.1(ii)]{PelSum14} implies the existence of $C=C(\om)\ge 1$ and $\gamma=\gamma(\om)>0$ such that
    \begin{equation*}
    \begin{split}
    \widehat{\nu}(r)\le C\left(\frac{1-r}{1-t}\right)^{\gamma}\widehat{\nu}(t),\quad 0\le r\le t<1.
    \end{split}
    \end{equation*}
A direct application of this inequality shows that
	$$
	\frac1{\widehat{\nu}(t)^p(1-t)^{p(n+1-\beta)-1}}
	\lesssim1+\int_0^t\frac{ds}{\widehat{\nu}(s)^p(1-s)^{p(n+1-\beta)}},\quad 0<t<1.
	$$
Further, since $I_3(a,r)$ is uniformly bounded, the first statement is proved for $|\overline{a}z|\ge\frac12$. The case $|\overline{a}z|<\frac12$ follows from the proof above with minor modifications concerning $I_2$. The second statement is an immediate consequence of the first one and Fubini's theorem.\hfill$\Box$

\medskip

\noindent\emph{Proof of Corollary~\ref{Corollary:Maximal function of Berezin transform}.}
First observe that
	$$
	T_{\delta,\om}(f)(z)
	\asymp\om(S(z))^\delta\int_\D|f(\zeta)||B^\om_z(\zeta)|^{1+\delta}\om(\zeta)\,dA(\zeta),\quad z\in\D,\quad f\in L^1_\om,
	$$
by Theorem~\ref{Theorem:off-diagonal kernel}(iv) and the note following it. We split $\D$ into pieces based on the point $z$ as follows. For $z\in\D\setminus\{0\}$ and $K>1$ such that $K(1-|z|)<1$ we write $S_K(z)=S\left((1-K(1-|z|)e^{i\arg z}\right)$. If $K(1-|z|)\ge 1$, we set $S_K(z)=\D$. Then $|1-\overline{z}\zeta|\asymp1-|z|$, if $\zeta\in S(z)$, and $|1-\overline{z}\zeta|\asymp K^k(1-|z|)$, for $\zeta\in S_{K^k}(z)\setminus S_{K^{k-1}}(z)$ and $k\in\N$. Let $N\in\N$ be the smallest natural number such that $K^N(1-|z|)\ge1$. Theorem~\ref{Theorem:off-diagonal kernel} implies
	\begin{equation*}
	\begin{split}
    T_{\delta,\om}(f)(z)
    &\asymp\om(S(z))^\delta\sum_{k=1}^{N}\int_{S_{K^k}(z)\setminus S_{K^{k-1}}(z)}|f(\zeta)||B^\om_z(\zeta)|^{1+\delta}\om(\zeta)\,dA(\zeta)\\
    &\quad +\om(S(z))^\delta\int_{S(z)}|f(\z)||B^{\om}_z(\z)|^{1+\delta}\om(\z)\,dA(\z)\\
    &\lesssim \om(S(z))^\delta\sum_{k=1}^{N}\frac{1}{\om(S_{K^{k}}(z))^{1+\delta}}\int_{S_{K^{k}}(z)}|f(\z)|\om(\z)\,dA(\z)\\
    &\quad + \frac{1}{\om(S(z))}\int_{S(z)}|f(\z)|\om(\z)\,dA(\z)\\
    &\lesssim M_{\om}(f)(z)\left(\sum_{k=1}^{N}\frac{1}{K^{\delta k}}+1\right)
    \lesssim M_{\om}(f)(z), \quad z \in \D.
	\end{split}
	\end{equation*}
The statement then follows by a simple geometric argument.
\hfill$\Box$

\medskip

\noindent\emph{Proof of Corollary~\ref{Corollary:B-boundedness}.}
The statements (i), (iii) and (iv) are equivalent by \cite{PelRatMathAnn}, and Corollary~\ref{Corollary:Maximal function of Berezin transform} shows that (i) implies (ii). Assume now (ii), and define $f_a(z)=\left(\frac{1-|a|}{1-\overline{a}z}\right)^\eta$, where $a\in\D$ and $\eta>0$. By \cite[Lemma~2.1(vii)]{PelSum14} we may choose $\eta=\eta(\om)>0$ sufficiently large such that $\|f_a\|_{L^p_\om}^p\asymp\om(S(a))$ for all $a\in\D$. Write $a_\e=(1-\e(1-|a|))e^{i\arg a}$ for each
$\e\in(0,1]$ and $a\in\D\setminus\{0\}$. Then, by choosing $\e=\e(\om)>0$ sufficiently small, \cite[Lemma~7]{PRS2} yields
	\begin{equation*}
	\begin{split}
	&\om(S(a))
	\asymp\|f_a\|_{L^p_\om}^p\\
	&\gtrsim\int_\D\left(\om(S(z))^\delta\int_\D\left(\frac{1-|a|}{|1-\overline{a}\zeta|}\right)^\frac{\eta}{\alpha}|B^\om_z(\zeta)|^{1+\delta}\om(\zeta)\,dA(\zeta)\right)^{p\alpha}\,d\mu(z)\\
	&\gtrsim\int_{S(a)}\left(\om(S(z))^\delta\int_{S_\e(z)}|B^\om_z(\zeta)|^{1+\delta}\om(\zeta)\,dA(\zeta)\right)^{p\alpha}\,d\mu(z)\\
	&\gtrsim\int_{S(a)}\left(\om(S(z))^\delta\frac1{\om(S(z))^{1+\delta}}\om(S_\e(z))\right)\,d\mu(z)
	\asymp\mu(S(a)),\quad a\in\D.
	\end{split}
	\end{equation*}
Therefore (iv) is satisfied and the proof is complete.
\hfill$\Box$

\section{Further discussion}\label{sec4}

In this section we discuss several results that the method of proof of Theorem~\ref{Theorem:off-diagonal kernel} allows us to establish quite effortlessly. We begin with fractional derivatives. Their roots can be traced back to private correspondence between G.~l'H\^opital and G.~W.~Leibniz in 1695, the Euler transform, proposed as early as 1769 by L.~Euler, and J.~Liouville's studies on fractional calculus from 1832. However, the land marking work by G.~H.~Hardy and J.~E.~Littlewood~\cite{HL1932} from 1932 and the subsequent studies by T.~M. Flett~\cite{Flett1,Flett2} some decades later can be considered as the starting point of the modern study of fractional derivatives.

Our first result on this direction concerns the one-weight fractional derivative
	$$
	D^{\nu}(f)(z)=\sum_{n=0}^{\infty}\frac{\widehat{f}(n)}{\nu_{2n+1}}z^n,\quad z\in\D,\quad f\in\mathcal{H}(\D),
	$$
which is essentially an extension of the (complex) Riemann-Liouville fractional derivative. The method of proof of our main result works equally well for $D^\nu$ as for the standard derivatives, provided $\nu\in\DD$, and yields the following result.

\begin{theorem}\label{Cor:Fracresult}
Let $\om$ be a radial weight and $\nu\in\DD$. Then the following statements are equivalent:
		\begin{itemize}
    \item[(i)]$\left|D^{\nu}(B^{\om}_a)(z)\right|
		\lesssim\frac{\kappa(a,z)}{\om_{\kappa(a,z)}\nu_{\kappa(a,z)}}, \quad \kappa(a,z)=\frac{2}{|1-\overline{a}z|}, \quad a,z\in \D;$
    \item[(ii)]$\left\|D^{\nu}(B^{\om}_a)\right\|_{H^{\infty}}
		\lesssim\frac{1}{\om_{\frac{1}{1-|a|}}\nu_{\frac{1}{1-|a|}}(1-|a|)},\quad a\in\D;$
    \item[(iii)]$\left\|D^{\nu}(B^{\om}_a)\right\|_{\mathcal{B}}
		\lesssim\frac{1}{\om_{\frac{1}{1-|a|}}\nu_{\frac{1}{1-|a|}}(1-|a|)},\quad a\in\D;$
    \item[(iv)]$\left\|D^{\nu}(B^{\om}_a)\right\|_{A^2_{\omega}}^2
		\lesssim\frac{1}{\om_{\frac{1}{1-|a|}}\nu_{\frac{1}{1-|a|}}^2(1-|a|)},\quad a\in\D;$
    \item[(v)]$\om\in\DD$.
    \end{itemize}
\end{theorem}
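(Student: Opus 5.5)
Note first that, by \eqref{kernel}, $D^\nu(B^\om_a)(z)=\frac12\sum_{k}\frac{(\overline{a}z)^k}{\om_{2k+1}\nu_{2k+1}}$. The plan is to view this as a Bergman-type series whose coefficients are reciprocals of the product moments $\om_{2k+1}\nu_{2k+1}$, and to rerun the proof of Theorem~\ref{Theorem:off-diagonal kernel} with $W(x)=\frac{1}{\om_{2x+1}\nu_{2x+1}}$ in place of $\frac1{\nu_x}$.

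\textbf{(v) implies (i).} Assume $\om\in\DD$. Write $\zeta=\overline{a}z$ and split $f(\zeta)=\sum_k W(k)\zeta^k$ at $E=\lfloor\kappa(\zeta)\rfloor$.

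The only ingredients used in the $n=0$ argument are the following two.
\begin{itemize}
\item[(a)] $x\mapsto W(x)^{-1}x^{\beta}$ is essentially increasing for some $\beta>0$.
\item[(b)] $|W^{(N)}(x)|\lesssim W(x)x^{-N}$ for each $N$.
\end{itemize}
Property (a) holds because it holds for $\om_{2x+1}$ and for $\nu_{2x+1}$ separately (with exponents $\beta_1,\beta_2$), so one takes $\beta=\beta_1+\beta_2$.

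For (b), write $W=W_1W_2$ with $W_1=1/\om_{2x+1}$ and $W_2=1/\nu_{2x+1}$. Each factor satisfies the estimate \eqref{derivative-estimate} by the Fa\`a di Bruno computation already carried out, since both weights lie in $\DD$. The Leibniz rule then gives
$$|W^{(N)}|\le\sum_j\binom Nj|W_1^{(j)}||W_2^{(N-j)}|\lesssim W_1W_2x^{-N}.$$

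With (a) and (b) in hand, the bounds for $S_1$, $S_{2,N}$ and the induction for $S_{1,N}$ go through verbatim. The $N=2$ base step is a special case of this, because it only used first differences of $W$, which are controlled by (b) with $N=1$. This yields $|f(\zeta)|\lesssim\kappa W(\kappa)\asymp\frac{\kappa}{\om_\kappa\nu_\kappa}$, using the doubling of both weights.

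\textbf{(i) implies (ii) implies (iii).} These are immediate, as in the main theorem. Taking the supremum in (i) over $z$ gives (ii); the index $2/(1-|a|)$ can be replaced by $1/(1-|a|)$ once (v) is known, and the loop closes through (v). The Schwarz--Pick theorem then gives (iii).

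\textbf{(iii) implies (v).} This is the step I expect to need the most care. The Bloch norm gives a lower bound by
$$(1-|a|)\sum_k\frac{k|a|^{2k}}{\om_{2k+3}\nu_{2k+3}}.$$
As in \eqref{lkjflerjlqkerjnv}, restrict the comparison integral to $t\in[1-\frac{1-|a|}{2},1-\frac{1-|a|}{4}]$. On this range the factor $\nu$ is evaluated at indices comparable to $1/(1-|a|)$, so, since $\nu\in\DD$ and $\nu_x$ is decreasing, it is comparable to $\nu_{1/(1-|a|)}$ and cancels with the right-hand side. What remains is $\om_x\lesssim\om_{4x}$. Choosing $K_2=2K_1$ gives exactly $\om_x\lesssim\om_{2x}$ after a rescaling, which is the definition of $\DD$.

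\textbf{(iv) if and only if (v).} Parseval's identity gives
$$\|D^\nu B^\om_a\|^2_{A^2_\om}\asymp\sum_k\frac{|a|^{2k}}{\om_{2k+1}\nu_{2k+1}^2}.$$
If $\om\in\DD$, the dyadic estimates \eqref{edjlekjvbleaj2}--\eqref{edjlekjvbleaj3} apply with the extra factor $\nu^{-2}$ absorbed into the essential monotonicity of $x^{\beta}\om_x\nu_x^2$. Conversely, the same truncated-integral lower bound as in the previous step forces the doubling of $\om$, with the $\nu^2$ factor again cancelling up to constants.
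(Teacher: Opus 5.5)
Your proposal is correct and follows the route the paper intends. The paper omits this proof, saying only that the method of Theorem~\ref{Theorem:off-diagonal kernel} carries over when $\nu\in\DD$; you make that explicit by isolating the two properties of $W(x)=1/(\om_{2x+1}\nu_{2x+1})$ that the $n=0$ argument actually uses, namely essential monotonicity of $x^\beta/W(x)$, and $|W^{(N)}(x)|\lesssim W(x)x^{-N}$ obtained via Leibniz from the Fa\`a di Bruno bound for each factor, while in the converse directions the $\nu$-factor cancels through the bound $\nu_{2k+1}\le\nu_{1/(1-|a|)}$ on the truncated range. One cosmetic point: no rescaling is needed at the end of (iii)$\Rightarrow$(v), since $\om_x\lesssim\om_{4x}\le\om_{2x}$ by the monotonicity of moments.
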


The proof of Theorem~\ref{Theorem:off-diagonal kernel} is rich in details, and since, at the end of the day, one can fairly say that the method of proof there is relatively simple, we trust that its extension to one-weight fractional derivatives given above is easily reachable without further guidance. Therefore here, as well as along this section, we omit most of the details of the proofs. Before proceeding further, we invite the reader to consult \cite{BMNP,MPR2024,Pel-Rosa-2022} regarding to recent studies related to $D^\nu$.

The two-weight fractional derivative 	
	$$
	R^{\tau,\nu}(f)(z)=\sum_{n=0}^\infty\frac{\tau_{2n+1}}{\nu_{2n+1}}\widehat{f}(n)z^n,\quad z\in\D,
	$$
induced by radial weights $\tau$ and $\nu$, was defined recently in \cite{P2020}. This derivative is essentially a generalization of the operator studied about two decades ago in \cite{ZZ2008,Zhu2,Zhu3}. Although it is not immediate from the definitions, it is known that the set of operators $R^{\tau,\nu}$ contains all one-weight fractional derivatives \cite{P2020}. Minor adjustments in the first part of the proof of Theorem~\ref{Theorem:off-diagonal kernel} give us
	\begin{equation}\label{IFF}
	|R^{\tau,\nu}(B^\omega_a)(z)|\lesssim\int_1^{\kappa(a,z)}\frac{\tau_t}{\nu_t\om_t}\,dt+1,\quad a,z\in\D,
	\end{equation}
provided all weights involved belong to $\DD$. The appearance of an integral on the right is fully expected as $R^{\tau,\nu}$ may represent an integral of high order, and hence $R^{\tau,\nu}(B^\omega_a)$ may constitute a uniformly bounded family of bounded analytic functions. If we assume that this is not the case, and further that the integral on the right behaves well in the sense that
	$$
	\int_1^{\kappa}\frac{\tau_t}{\nu_t\om_t}\,dt
	\lesssim\frac{\tau_\kappa}{\nu_\kappa\om_\kappa}\kappa,\quad 1\le\kappa<\infty,
	$$
then it turns out that \eqref{IFF} ensures $\om\in\DD$ under the hypothesis that both $\tau$ and $\nu$ belong to~$\DD$. Therefore, by taking into account the appropriate variants of the statements (ii)-(iv) in Theorem~\ref{Cor:Fracresult} for the two-weight case, we can achieve an equally covering ``if and only if''-result also in this two-weight setting. Before going further, we mention \cite{DGRW,PRW} as references on recent studies related to the two-weight fractional derivatives. We also emphasize here that only a quick look on these works shows the apparent usefulness of $R^{\tau,\nu}$ in the operator theory related to the Bergman spaces induced by radial (doubling) weights.

The two-weight fractional derivative Bergman kernel estimate \eqref{IFF} can be easily used to obtain sharp pointwise upper estimates for the reproducing kernels of Dirichlet spaces. To make this rigorous, let~$\DDD^2_\omega$ denote the weighted Dirichlet space endowed with the norm
	$$
	\|f\|_{\DDD^2_\omega}=\|f'\|_{A^2_\omega}+|f(0)|.
	$$
Normalized monomials give the basis $\left\{\frac{z^n}{n\sqrt{2}\sqrt{\omega_{2n-1}}}\right\}_{n=1}^\infty\cup\{1\}$ and show that the reproducing kernel of $\mathcal D^2_\om$ has the representation
	\begin{equation*}
	D^\omega_a(z)
	=1+\frac{\overline{a}z}2\sum_{n=0}^\infty\frac{(\overline{a}z)^n}{(n+1)^2\omega_{2n+1}}
	=1+\overline{a}zK^\omega_a(z),
	\end{equation*}
where
	$
	K^\omega_a=R^{\tau,\nu}(B^\omega_a)
	$
for $\nu\equiv1$ and $\tau=2\left(\log\frac1{|\cdot|}\right)^2$. To establish this last identity, one just has to verify $\frac{\tau_{2n+1}}{\nu_{2n+1}}=\frac1{(n+1)^2}$ for all $n\in\N\cup\{0\}$ for our choices of weights. It then follows from~\eqref{IFF} that
	\begin{equation}\label{DDDDDDDDD}
	|D^\omega_a(z)-1|\lesssim|\overline{a}z|\left(\int_1^{\kappa(a,z)}\frac{dt}{t^2\om_t}+1\right),\quad a,z\in\D.
	\end{equation}
This argument applies with appropriate modification to higher-order derivatives as well. The estimate \eqref{DDDDDDDDD} is sharp pointwise up to a multiplicative constant as is seen by considering the kernels induced by the standard radial weights. In view of our main result, \eqref{DDDDDDDDD} comes by no means as a surprise as the reproducing kernels of the Dirichlet space $\mathcal D^2_\om$ are, roughly speaking, the second primitives of the corresponding Bergman kernels. We finish our discussion on fractional derivatives by noting that, with all the techniques used above in hand, one may now establish integrated fractional derivative and Dirichlet reproducing kernel estimates in the spirit of Corollary~\ref{Corollary:Integrated kernel estimates} by following the reasoning used in the proof of the said corollary. Since these arguments do not reveal us anything essentially new, we do not get into details and leave them for interested readers. To this end, we just mention that, for example, the upper bounds for the integrated Dirichlet reproducing kernel estimates related to certain regular weights discovered in \cite[Theorem~4]{PR2016/2} can be easily recovered by our arguments.

Next, we note that the conclusions of Theorem~\ref{Theorem:off-diagonal kernel} extend to the harmonic case. The harmonic Bergman space $h^2_{\om}$ consists of the harmonic functions in $L^2_\om$, and its reproducing kernel is
		\begin{equation*}
		\begin{split}
		b^{\om}_a(z)
		=\frac12\sum_{n=0}^{\infty}\frac{(\cona z)^n}{\om_{2n+1}}
		+\frac12\sum_{n=1}^{\infty}\frac{(a\conz)^n}{\om_{2n+1}}
		=B^{\om}_a(z)+\overline{B^{\om}_a(z)-\frac{1}{2\om_1}},\quad a,z\in\D.
		\end{split}
		\end{equation*}
Further, the harmonic Bloch space $\mathcal{B}_h$ is defined by the condition
		\begin{equation*}
    \left\|f\right\|_{\mathcal{B}_h}
		=\sup_{z\in\D}\left(|g'(z)|+|h'(z)|\right)(1-|z|^2)+|f(0)|<\infty,\quad f=g+\overline{h},\quad g,h\in\mathcal{H}(\D).
		\end{equation*}
The harmonic counterpart of Theorem~\ref{Theorem:off-diagonal kernel} readily follows from the proof of the analytic case, and is stated as follows.

\begin{theorem}\label{harmonic case}
Let $\om$ be a radial weight. Then the following statements are equivalent:
		\begin{itemize}
    \item[(i)] $|b^{\om}_a(z)|\lesssim\frac{\kappa(a,z)}{\om_{\kappa(a,z)}}, \quad \kappa(a,z)=\frac{2}{|1-\overline{a}z|}, \quad a,z\in \D;$
    \item[(ii)] $\left\|b^{\om}_a\right\|_{h^{\infty}} \lesssim \frac{1}{\om_{\frac{1}{1-|a|}}(1-|a|)}, \quad a \in \D;$
    \item[(iii)] $\left\|b^{\om}_a\right\|_{\mathcal{B}_h} \lesssim \frac{1}{\om_{\frac{1}{1-|a|}}(1-|a|)}, \quad a \in \D;$
    \item[(iv)]  $\left\|b^{\om}_a\right\|_{h^2_{\omega}}^2 \lesssim \frac{1}{\om_{\frac{1}{1-|a|}}(1-|a|)}, \quad a \in \D;$
    \item[(v)] $\om \in \DD$.
    \end{itemize}
\end{theorem}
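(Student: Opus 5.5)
The plan is to reduce everything to the analytic case via the identity $b^\om_a(z)=B^\om_a(z)+\overline{B^\om_a(z)}-\frac{1}{2\om_1}$ and then quote Theorem~\ref{Theorem:off-diagonal kernel} with $n=0$.

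For (v)$\Rightarrow$(i), assume $\om\in\DD$. By Theorem~\ref{Theorem:off-diagonal kernel}(i) with $n=0$, $|b^\om_a(z)|\le 2|B^\om_a(z)|+\frac1{2\om_1}\lesssim \frac{\kappa(a,z)}{\om_{\kappa(a,z)}}+1$. The constant term is absorbed because $\kappa(a,z)\ge1$ and $\om_{\kappa}\le\om_1$, so $\frac{\kappa}{\om_\kappa}\ge\frac1{\om_1}$. This gives (i).

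The implications (i)$\Rightarrow$(ii) is immediate: take the supremum over $z$, note $\kappa(a,z)\le\frac{2}{1-|a|}$, and use that $x\mapsto x/\om_x$ is increasing, getting a bound with $\om_{\frac{2}{1-|a|}}$. This is the same harmless factor $2$ issue as in \eqref{(ii')}, and it is handled identically once (v) is recovered at the end.

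For (ii)$\Rightarrow$(iii), write $b^\om_a=g+\overline h$ with $g=B^\om_a$ and $h=B^\om_a-\frac1{2\om_1}$. Then $\operatorname{Re}b^\om_a=2\operatorname{Re}B^\om_a-\frac1{2\om_1}$ and $\operatorname{Im} b^\om_a=0$, so $\|b^\om_a\|_{h^\infty}$ controls $\sup|\operatorname{Re} B^\om_a|$. Borel--Carathéodory/Schwarz-type estimates for $g'$ in terms of $\sup|\operatorname{Re}g|$ (the harmonic Schwarz lemma, $|g'(z)|(1-|z|^2)\lesssim\sup|\operatorname{Re} g|$) then give $\|b^\om_a\|_{\mathcal B_h}\lesssim \|b^\om_a\|_{h^\infty}$. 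The value $|b^\om_a(0)|=\frac1{2\om_1}$ is again harmless.

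For (iii)$\Rightarrow$(v), $\|b^\om_a\|_{\mathcal B_h}\ge\|B^\om_a\|_{\B}$ up to the constant term, so (iii) of Theorem~\ref{Theorem:off-diagonal kernel} with $n=0$ (or its variant \eqref{(iii')}) holds. The computation \eqref{d.f,ns.,m,mmn}--\eqref{lkjflerjlqkerjnv} then yields $\om\in\DD$. This also closes the loop for the factor-$2$ versions.

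For (iv)$\Leftrightarrow$(v), the monomials $z^n$ and $\overline z^m$ ($m\ge1$) are orthogonal in $L^2_\om$ for radial $\om$. Hence $\|b^\om_a\|_{h^2_\om}^2=2\|B^\om_a\|_{A^2_\om}^2-\frac{1}{4\om_1^2}\cdot 2\om_1$, that is, $\|B^\om_a\|^2_{A^2_\om}\le\|b^\om_a\|^2_{h^2_\om}\le 2\|B^\om_a\|^2_{A^2_\om}$. The equivalence therefore follows from (iv)$\Leftrightarrow$(v) of Theorem~\ref{Theorem:off-diagonal kernel} with $n=0$.

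The only step needing care is (ii)$\Rightarrow$(iii). There we must bound the analytic part's Bloch seminorm by the sup of the real harmonic function $b^\om_a$. I would do this via the Poisson representation on discs $D(z,\frac{1-|z|}{2})$, using $2g'=\partial_z b^\om_a$. The remaining steps are bookkeeping.
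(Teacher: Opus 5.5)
Your proposal is correct and follows the same route the paper indicates: the displayed identity $b^\om_a=B^\om_a+\overline{B^\om_a-\frac{1}{2\om_1}}$, so that $b^\om_a=2\operatorname{Re}B^\om_a-\frac{1}{2\om_1}$, reduces everything to the case $n=0$ of Theorem~\ref{Theorem:off-diagonal kernel}, and you fill in the details the paper leaves to the reader, namely the harmonic Schwarz lemma in place of Schwarz--Pick for (ii)$\Rightarrow$(iii) and the orthogonality of $z^k$ and $\overline{z}^m$, which gives $\|b^\om_a\|_{h^2_\om}^2=2\|B^\om_a\|_{A^2_\om}^2-\frac{1}{2\om_1}$. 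One harmless slip: $\partial_z b^\om_a=g'$, not $2g'$, which only affects a constant.
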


Our last further observation concerns the underlying dimension. Let $\mathbb{B}_n=\{z \in \C^n: |z|<1\}$ denote the unit ball of the $n$-dimensional complex plane $\C^n$. For a radial weight $\om: \mathbb{B}_n \to [0,\infty)$, the reproducing kernel of $A^2_{\om}$ admits the series expansion
		\begin{equation*}
    B^{\om}_{a}(z)=\frac{1}{2n!}\sum_{k=0}^{\infty}\frac{(k+n-1)!}{k!\om_{2k+2n-1}}\langle z,a\rangle^k, \quad z,a \in \mathbb{B}_n,
		\end{equation*}
where $\langle\cdot,\cdot\rangle$ stands for the usual inner product on $\C^n$~\cite[p.~9--14]{Zhu2}.  The Bloch space $\mathcal{B}$ consists of all holomorphic functions on $\mathbb{B}_n$ satisfying
		\begin{equation*}
    \left\|f\right\|_{\mathcal{B}}
		=\sup_{z\in\mathbb{B}_n}|R(f)(z)|(1-|z|^2)+|f(0)|<\infty,
		\end{equation*}
where $R(f)$ is the radial derivative, defined as $R(f)(z)=\sum_{j=1}^{n}z_j\frac{\partial}{\partial z_j}f(z)$ for $z=(z_1,\dots,z_n) \in \mathbb{B}_n$. For a radial weight $\om$ in $\mathbb{B}_n$, we write $\om\in\DD$ if there exists a constant $C=C(\om)>0$ such that
		$$
		\om_x\le C\om_{2x}, \quad2n-1\le x<\infty.
		$$
An analogue of Theorem~\ref{Theorem:off-diagonal kernel} in the $n$-dimensional setting is given by the following result.

\begin{theorem}\label{unit ball}
Let $n\in\N$ and $\om$ be a radial weight in $\mathbb{B}_n$. Then the following statements are equivalent:
		\begin{itemize}
		\item[(i)] $|B^{\om}_a(z)|\lesssim\frac{\kappa(a,z)^n}{\om_{\kappa(a,z)}},
		\quad\kappa(a,z)=\frac{4n-2}{|1-\langle z,a\rangle|}, \quad a,z\in \mathbb{B}_n;$
		\item[(ii)] $\left\|B^{\om}_a\right\|_{H^{\infty}}
		\lesssim \frac{1}{\om_{\frac{2n-1}{1-|a|}}(1-|a|)^n}, \quad a \in \mathbb{B}_n;$
		\item[(iii)] $\left\|B^{\om}_a\right\|_{\mathcal{B}}
		\lesssim \frac{1}{\om_{\frac{2n-1}{1-|a|}}(1-|a|)^n}, \quad a \in \mathbb{B}_n;$
    \item[(iv)] $\left\|B^{\om}_a\right\|_{A^2_{\omega}}^2
		\lesssim \frac{1}{\om_{\frac{2n-1}{1-|a|}}(1-|a|)^n}, \quad a \in \mathbb{B}_n;$
    \item[(v)] $\om\in\DD$.
		\end{itemize}
\end{theorem}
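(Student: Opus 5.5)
The plan is to reduce Theorem~\ref{unit ball} to the one-variable machinery in the proof of Theorem~\ref{Theorem:off-diagonal kernel}. Put $\zeta=\langle z,a\rangle\in\D$. Then $B^\om_a(z)=F(\zeta)$, where
$$
F(\zeta)=\frac{1}{2n!}\sum_{k=0}^\infty\frac{(k+n-1)!}{k!\,\om_{2k+2n-1}}\zeta^k=\sum_{k=0}^\infty c_k\zeta^k .
$$
The coefficients satisfy $c_k\asymp \frac{(k+1)^{n-1}}{\om_{2k+2n-1}}$. I will write $c_k=W(k)$ with
$$
W(x)=\frac{\Gamma(x+n)}{2n!\,\Gamma(x+1)\,\om_{2x+2n-1}}, \qquad x\ge0.
$$

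\textbf{(v)$\Rightarrow$(i).} First I prove the analogue of \eqref{derivative-estimate}, namely $|W^{(N)}(x)|\lesssim \frac{x^{n-1-N}}{\om_{2x+2n-1}}$ for $x\ge1$. The factor $1/\om_{2x+2n-1}$ is handled exactly as before: Fa\`a di Bruno's formula, combined with \eqref{ksksksk*} for the weight $\nu(r)=r^{n-1}\om(\sqrt r)$, whose moments are $\nu_x=2\om_{2x+2n-1}$. The Gamma ratio $\Gamma(x+n)/\Gamma(x+1)$ is a polynomial of degree $n-1$, so its $j$-th derivative is $O(x^{n-1-j})$. The Leibniz rule then gives the claimed bound.

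With this in hand, the splitting argument goes through verbatim. I split at $E=\lfloor 2/|1-\zeta|\rfloor$ and multiply the tail by $(1-\zeta)^N$ with $N$ large relative to $\beta+n$, where $\beta$ comes from the essential increase of $x^\beta\om_x$. Lagrange's theorem handles $S_{2,N}$ and the induction handles $S_{1,N}$. The outcome is
$$
|F(\zeta)|\lesssim \frac{\kappa^{n}}{\om_{\kappa}},\qquad \kappa\asymp\frac{1}{|1-\zeta|}.
$$
The head sum obeys $\sum_{k<E}c_k\lesssim E^n/\om_{2E}$. Finally, the doubling property lets me pass between $\om_{2x+2n-1}$ and $\om_{c\,\kappa}$ for any fixed $c$, including the normalization $\kappa(a,z)=\frac{4n-2}{|1-\langle z,a\rangle|}$.

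\textbf{(i)$\Rightarrow$(ii)$\Rightarrow$(iii).} Step (i)$\Rightarrow$(ii) is immediate because $|1-\langle z,a\rangle|\ge1-|a|$. For (ii)$\Rightarrow$(iii) I need the radial derivative $R(f)(z)=\zeta F'(\zeta)$ with $\zeta=\langle z,a\rangle$. The function $g(w)=F(w|a|)$ is bounded on $\D$ by the $H^\infty$ bound, and Schwarz--Pick gives $|g'(w)|(1-|w|^2)\lesssim\|g\|_\infty$. Taking $w=\langle z,a\rangle/|a|$ and using $1-|w|^2\ge 1-|z|^2$ yields $|R(B^\om_a)(z)|(1-|z|^2)\lesssim \|B^\om_a\|_{H^\infty}$. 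The resulting bound carries the moment index $\frac{4n-2}{1-|a|}$ rather than $\frac{2n-1}{1-|a|}$; this is the same index-mismatch issue as in \eqref{(ii')}, and it is repaired once (v) is recovered.

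\textbf{(iii)$\Rightarrow$(v).} Evaluate at $z=ra/|a|$ with $r=|a|$, so that $\zeta=|a|^2$. This gives
$$
\frac{1}{\om_{\frac{4n-2}{1-|a|}}(1-|a|)^{n+1}}\gtrsim\sum_k \frac{k^{n}|a|^{2k}}{\om_{2k+2n-1}}.
$$
I then bound the right-hand side from below by integrating over $x\in[\frac{K_1}{1-|a|},\frac{K_2}{1-|a|}]$, exactly as in \eqref{lkjflerjlqkerjnv}. Comparing the two sides yields $\om_{y}\lesssim\om_{2y}$ for large $y$ after choosing $K_1,K_2$ suitably, and hence $\om\in\DD$.

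\textbf{(iv)$\Leftrightarrow$(v).} Integration in polar coordinates gives
$$
\|B^\om_a\|_{A^2_\om}^2=B^\om_a(a)\asymp\sum_k\frac{(k+1)^{n-1}|a|^{2k}}{\om_{2k+2n-1}}.
$$
Assuming (v), this sum is bounded as in \eqref{edjlekjvbleaj}--\eqref{edjlekjvbleaj3}, with the exponent $2(n+1)$ there replaced by $n+1$. Conversely, assuming (iv), the lower-bound argument of \eqref{lkjflerjlqkerjnv} applied to this sum returns $\om\in\DD$.

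I expect the main obstacle to be the derivative estimate for $W$ together with the bookkeeping of the shifted and rescaled moment indices ($2x+2n-1$ versus $\kappa$, and the constant $4n-2$). Everything else transfers directly from the one-dimensional proof.
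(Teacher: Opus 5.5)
Your proposal is correct and is essentially the argument the paper intends: the paper gives no separate proof of Theorem~\ref{unit ball}, only the assertion that the method of Theorem~\ref{Theorem:off-diagonal kernel} carries over. Your reduction to $\zeta=\langle z,a\rangle$, the estimate $|W^{(N)}(x)|\lesssim x^{n-1-N}/\om_{2x+2n-1}$ obtained from Leibniz plus Fa\`a di Bruno with $\nu(r)=r^{n-1}\om(\sqrt r)\in\DD$, the split/Lagrange/induction scheme with $N>n+\beta$, and the repair of the moment index through doubling together form exactly that adaptation. One small correction: the mismatch between $\frac{4n-2}{1-|a|}$ and $\frac{2n-1}{1-|a|}$ already appears at (i)$\Rightarrow$(ii), not at the Schwarz--Pick step, but your chain (v)$\Rightarrow$(i)$\Rightarrow$(ii$'$)$\Rightarrow$(iii$'$)$\Rightarrow$(v), followed by the doubling repair, handles it correctly.
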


The dimension-related factors $4n-2$ and $2n-1$ appearing in the statement are needed because the moment $\om_x$ is required to be well defined only for $x\ge 2n-1$. Now that Theorem~\ref{unit ball} is established, one can certainly reach out for higher dimensional analogues of Corollaries~\ref{Corollary:Integrated kernel estimates},~\ref{Corollary:Maximal function of Berezin transform} and \ref{Corollary:B-boundedness} or pointwise estimates for the fractional derivatives of the kernel functions, but in order to avoid unnecessary repetition we rest our case here.

\end{document}